\newtheorem{theorem}{Theorem}[section]
\newtheorem{corollary}[theorem]{Corollary} 
\newtheorem{lemma}[theorem]{Lemma}
\newtheorem{proposition}[theorem]{Proposition}
\theoremstyle{definition}
\theoremstyle{remark}
\newtheorem{remark}[theorem]{Remark}
\newtheorem{example}[theorem]{Example}
\numberwithin{equation}{section}
\newcommand{\abs}[1]{\lvert#1\rvert}
\def\norm#1{\left\Vert#1\right\Vert}
\def\Q {{\mathbb Q}}
\def\s{{\mathbb S}}
\def\I {{\mathbb I}}
\def\C {{\mathbb C}}
\def\T {{\mathbb T}}
\def\N{{\mathbb N}}
\def\Z {{\mathbb Z}}
\def\R{{\mathbb R}}
\def\e{\varepsilon}
\def\Homeo{{\mbox{\rm Homeo}\,}}
\def\Aut{{\mbox{\rm Aut}\,}}
\def\RUCB{{\mbox{\rm RUCB}\,}}
\def\UCB{{\mbox{\rm UCB}\,}}
\def\LUCB{{\mbox{\rm LUCB}\,}}
\def\H {{\mathcal H}}
\newcommand\B{{\mathscr B}}
\newcounter{quest}
\begin{document}

\title[Amenability versus property $(T)$]
{Amenability versus property $(T)$ for non locally compact topological groups}

\author[V.G. Pestov]{Vladimir G. Pestov}
\thanks{Special Visiting Researcher of the program Science Without Borders of CAPES (Brazil), processo 085/2012.}

\address{Department of Mathematics and Statistics, 
University of Ottawa, 585 King Edward Ave., Ottawa, Ontario, Canada K1N 6N5}

\address{Departamento de Matem\'atica, Universidade Federal de Santa Catarina, Trindade, Florian\'opolis, SC, 88.040-900, Brazil}
\email{vpest283@uottawa.ca}

\thanks{{\it 2000 Mathematics Subject Classification:} 22A25, 43A65, 57S99.
}



\begin{abstract} 
        For locally compact groups amenability and Kazhdan's property (T) are mutually exclusive in the sense that a group having both properties is compact. This is no longer true for more general Polish groups. However, a weaker result still holds for SIN groups (topological groups admitting a basis of conjugation-invariant neighbourhoods of identity): if such a group admits sufficiently many unitary representations, then it is precompact as soon as it is amenable and has the strong property $(T)$ (i.e. admits a finite Kazhdan set). If an amenable topological group with property $(T)$ admits a faithful uniformly continuous representation, then it is maximally almost periodic. 
        In particular, an extremely amenable SIN group never has strong property $(T)$, and an extremely amenable subgroup of unitary operators in the uniform topology is never a Kazhdan group.
        This leads to first examples distinguishing between property $(T)$ and property $(FH)$ in the class of Polish groups. 
Disproving a 2003 conjecture by Bekka, we construct a complete, separable, minimally almost periodic topological group with property $(T)$, having no finite Kazhdan set.
Finally, as a curiosity, we observe that the class of topological groups with property $(T)$ is closed under arbitrary infinite products with the usual product topology.
\end{abstract}

\maketitle

\section{Introduction}

One of the most immediate observations about groups with property $(T)$ is the following (cf. e.g. Thm. 1.1.6 in \cite{BdlHV}).

\begin{theorem}
A locally compact group $G$ with property $(T)$ is amenable if and only if $G$ is compact.
\end{theorem}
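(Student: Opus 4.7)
The plan is to prove the two directions separately, with the compact direction being essentially immediate and the converse relying on the interaction between amenability and the regular representation.

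For the easy direction, suppose $G$ is compact. Then normalized Haar measure is an invariant mean, so $G$ is amenable. Moreover, given any unitary representation $\pi$ of $G$ with almost invariant vectors, one can average a unit almost invariant vector $\xi$ over $G$ against Haar measure to produce an honest invariant vector close to $\xi$; this gives property $(T)$ with $G$ itself as (compact) Kazhdan set and any $\e > 0$.

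For the nontrivial direction, assume $G$ is amenable and has property $(T)$. The key input is Hulanicki's theorem characterizing amenability of a locally compact group: $G$ is amenable if and only if the trivial representation $1_G$ is weakly contained in the left regular representation $\lambda_G$ on $L^2(G, \mu)$, where $\mu$ is left Haar measure. I would invoke this to conclude that $\lambda_G$ has almost invariant vectors. Property $(T)$ then promotes weak containment to actual containment, so $\lambda_G$ has a nonzero genuinely invariant vector $f \in L^2(G,\mu)$.

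To finish, I would argue that any such invariant vector forces $G$ to be compact: $f$ satisfies $f(g^{-1}x) = f(x)$ for $\mu$-a.e.\ $x$ and every $g \in G$, and a standard Fubini argument shows $f$ must be constant almost everywhere. Since $f \in L^2(G,\mu)$ is nonzero and constant a.e., the Haar measure $\mu(G)$ must be finite, and a locally compact group with finite Haar measure is compact.

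The only step with real content is the use of Hulanicki's theorem, which is a classical result I would cite rather than reprove; the rest is formal, and the whole argument genuinely uses local compactness only through the existence of Haar measure and the Hulanicki characterization — precisely the points at which the theorem will fail to extend to arbitrary topological groups in the sequel.
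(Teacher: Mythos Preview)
Your proof is correct and follows essentially the same route as the paper's sketch: the paper invokes Reiter's condition $(P_2)$ --- that $G$ is amenable if and only if $L^2(G)$ has almost invariant vectors --- which is just another name for the characterization you call Hulanicki's theorem, and then concludes exactly as you do that property $(T)$ forces a nonzero invariant vector in $L^2(G)$, hence finite Haar measure and compactness.
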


This is a direct consequence of Reiter's condition $(P_2)$ which states that a locally compact group $G$ is amenable if and only if the unitary $G$-module $L^2(G)$ has almost invariant vectors, and hence by the definition of property $(T)$ there must be an invariant vector of norm one. Clearly, the argument does not extend to non locally compact groups, because typically such groups admit even no quasi-invariant measure. A result of Andr\'e Weil (cf. Appendix B in \cite{GTW} for a discussion and references) says that a Polish group equipped with a left quasi-invariant sigma-finite Borel measure is locally compact.

Not only the proof, but the result itself is in general invalid for Polish non-locally compact groups. 
The group $\Homeo_+(\I)$ of orientation-preserving homeomorphisms of the unit interval with the compact-open topology is amenable, in fact
even {\em extremely amenable} \cite{P1}, that is, has a fixed point in every compact space upon which it acts continuously, yet admits no non-trivial unitary representations \cite{megrelishvili}, and so has property $(T)$ for trivial reasons. 

The situation does not change if we limit ourselves to topological groups admitting sufficiently many strongly continuous unitary representations. The unitary group $U(\ell^2)$ with the strong operator topology is a Polish group having property $(T)$, in fact even strong property $(T)$, meaning it contains a finite, and not just compact, Kazhdan set (Bekka \cite{Bek3}). At the same time, the group $U(\ell^2)_s$ is amenable, that is, every continuous action of $U(\ell^2)_s$ on a compact space admits an invariant regular Borel probability measure (de la Harpe \cite{dlH}). Moreover, according to Gromov and Milman \cite{GrM}, this group is extremely amenable. 

It turns out that amenability and strong property $(T)$ are still incompatible in a large class of non locally compact groups. 
A topological group $G$ is called a {\em SIN group} (from Small Invariant Neighbourhoods) if neighbourhoods of the identity invariant under conjugation form a neighbourhood basis, or, equivalently, if the left and the right uniform structures on $G$ coincide. 

Using Bekka's concept of an amenable unitary representation \cite{Bek1} and a result from \cite{GP}, we prove the following.  

\begin{theorem}
        Let $G$ be a SIN group whose continuous unitary representations separate points and closed sets. If $G$ is amenable and has strong property $(T)$, then it is precompact.
\label{th:main}
\end{theorem}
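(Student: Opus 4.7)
The plan is to show that every continuous unitary representation of $G$ decomposes as an orthogonal direct sum of finite-dimensional subrepresentations; together with the separation hypothesis this forces $G$ to embed topologically into its Bohr compactification and hence to be precompact.

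First I would appeal to the cited result of Giordano--Pestov \cite{GP}: for an amenable SIN group $G$, every continuous unitary representation $\pi\colon G \to U(\H)$ is amenable in the sense of Bekka \cite{Bek1}, i.e.\ admits a $\pi(G)$-invariant state on $B(\H)$. By Bekka's characterisation, this is equivalent to saying that the adjoint representation $\pi \otimes \bar\pi$ on the Hilbert space $\mathrm{HS}(\H)$ of Hilbert--Schmidt operators (given by $T \mapsto \pi(g) T \pi(g)^{-1}$) weakly contains the trivial representation, and therefore possesses $(Q,\e)$-invariant unit vectors for every finite $Q \subseteq G$ and every $\e > 0$.

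Next, let $(Q, \e)$ be a finite Kazhdan pair witnessing strong property $(T)$. By the previous step, $\pi \otimes \bar\pi$ admits $(Q, \e)$-invariant unit vectors, and hence, by the Kazhdan property, a nonzero invariant vector: a nonzero Hilbert--Schmidt operator $T$ with $\pi(g) T = T \pi(g)$ for every $g \in G$. Then $T^*T$ is nonzero, positive, compact, and commutes with $\pi(G)$, so each of its nonzero spectral eigenspaces is a finite-dimensional $\pi(G)$-invariant subspace of $\H$. A standard Zorn's lemma argument applied to mutually orthogonal finite-dimensional invariant subspaces then yields a decomposition $\pi = \bigoplus_i \pi_i$ with each $\pi_i$ finite-dimensional. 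Consequently, the initial topology on $G$ induced by all continuous unitary representations coincides with the initial topology induced by continuous \emph{finite-dimensional} unitary representations. By hypothesis the former equals the given topology of $G$, so the canonical homomorphism $G \to bG$ into the Bohr compactification is a topological embedding, exhibiting $G$ as a subgroup of a compact group.

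The main obstacle is the very first step: promoting amenability of $G$ to amenability of an arbitrary continuous unitary representation. This is exactly where both the SIN hypothesis and the cited result from \cite{GP} are indispensable; without SIN, unitary representations of an amenable Polish group can fail to be amenable and the entire argument would collapse. Once past that input, the rest is a fairly mechanical chain: Bekka's equivalence to convert amenability into almost invariant vectors in $\pi\otimes\bar\pi$, strong $(T)$ to upgrade these to a genuine invariant vector, the spectral theorem for compact operators to extract finite-dimensional subrepresentations, and finally the separation hypothesis to convert pointwise finite-dimensionality into topological precompactness via the Bohr compactification.
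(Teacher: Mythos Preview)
Your proof is correct and follows essentially the same route as the paper. The paper packages the step ``amenable representation $+$ strong $(T)$ $\Rightarrow$ existence of a finite-dimensional subrepresentation'' into a separate lemma (citing Bekka--Valette for the equivalence between an invariant vector in $\pi\otimes\bar\pi$ and a finite-dimensional subrepresentation), and then applies it to the orthogonal complement $\pi_\infty=\pi_{fin}^\perp$ of the finite-dimensional part of the universal representation, whereas you unroll that lemma via the spectral theorem for the compact intertwiner $T^\ast T$ and then use Zorn's lemma on an arbitrary representation; these are cosmetic rearrangements of the same argument.
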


A topological group $G$ is {\em minimally almost periodic} if it has no non-trivial continuous finite-dimensional unitary representations.

\begin{corollary}
Let $G$ be a minimally almost periodic SIN group admitting a non-trivial unitary representation. If $G$ is amenable, then it does not have strong property $(T)$.
\end{corollary}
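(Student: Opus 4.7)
The plan is to deduce the corollary from Theorem \ref{th:main} by passing to a well-chosen quotient and arguing by contradiction. Suppose that $G$ does have strong property $(T)$. Let $K = \bigcap_\pi \ker\pi$, where $\pi$ runs over all continuous unitary representations of $G$. Since, by hypothesis, $G$ admits a non-trivial continuous unitary representation, $K \neq G$, and the quotient $G/K$ is a non-trivial Hausdorff topological group.

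My next move is to check that $G/K$ satisfies the hypotheses of Theorem \ref{th:main}. The SIN property is preserved by Hausdorff quotients, since images of conjugation-invariant neighborhoods of the identity form a neighborhood basis in $G/K$, and amenability is likewise preserved by quotients. Strong property $(T)$ passes to $G/K$: if $Q \subset G$ is a finite Kazhdan set with constant $\e$, then its image in $G/K$ is a finite Kazhdan set with the same constant, because every continuous unitary representation of $G/K$ lifts to a continuous unitary representation of $G$ trivial on $K$ with the same almost-invariant vectors. By definition of $K$, the continuous unitary representations of $G/K$ separate its points; the point to examine carefully is that, in the SIN setting, they also separate points from closed sets, i.e.\ that the canonical homomorphism $G/K \hookrightarrow \prod_\pi U(H_\pi)$ is a topological embedding.

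Applying Theorem \ref{th:main} to $G/K$ then yields that $G/K$ is precompact, so its Hausdorff completion is a non-trivial compact group $H$. By Peter--Weyl, $H$ admits a non-trivial continuous finite-dimensional unitary representation; its restriction to the dense subgroup $G/K$, composed with the quotient map $G \to G/K$, gives a non-trivial continuous finite-dimensional unitary representation of $G$, contradicting minimal almost periodicity. The step I expect to be the main obstacle is the verification that the continuous unitary representations of $G/K$ separate points from closed sets and not merely points; the SIN assumption, through the coincidence of the left and right uniformities, should be precisely what ensures that the quotient topology on $G/K$ agrees with the topology pulled back from $\prod_\pi U(H_\pi)$, so that one is really applying Theorem \ref{th:main} to the intrinsic topology of the quotient rather than to the weak topology induced by its unitary dual.
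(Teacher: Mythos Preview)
Your approach via the quotient $G/K$ is more circuitous than necessary and contains a genuine gap at precisely the point you flag. You need the unitary representations of $G/K$ to separate points from \emph{closed sets} in the quotient topology (equivalently, that the quotient topology coincides with the initial topology pulled back from $\prod_\pi U(\H_\pi)_s$), but the appeal to SIN does not establish this. SIN tells you the topology of $G$ is generated by continuous bi-invariant pseudometrics; it does not tell you that the particular pseudometrics coming from positive-definite functions already form a subbasis among these. Nor can you sidestep the issue by equipping $G/K$ with the initial topology from $\prod_\pi U(\H_\pi)_s$: that product is not SIN when any $\H_\pi$ is infinite-dimensional, so you would lose the SIN hypothesis required to invoke Theorem \ref{th:main}. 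As written, then, the argument is incomplete.

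The paper's route avoids this entirely by using not the statement of Theorem \ref{th:main} but the ingredients of its proof (Theorem \ref{th:gp} and Lemma \ref{l:fd}) applied directly to the given representation. Take any non-trivial strongly continuous unitary representation $\pi$ of $G$ and pass to the orthogonal complement of the invariant vectors, so that $\pi$ is still non-trivial and has no invariant vectors. Since $G$ is amenable and SIN, $\pi$ is an amenable representation (Theorem \ref{th:gp}); since $G$ has strong property $(T)$, $\pi$ must contain a finite-dimensional subrepresentation (Lemma \ref{l:fd}), necessarily non-trivial because $\pi$ has no invariant vectors. This contradicts minimal almost periodicity. No quotient is needed, and the hypothesis ``admitting a non-trivial unitary representation'' is used exactly once, to supply $\pi$; the separation hypothesis of Theorem \ref{th:main} never enters.
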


This result leads us to conclude that a number of well-known infinite dimensional Polish groups do not have strong property $(T)$. Among them are:
\begin{itemize}
\item
        The group $L^0((0,1),K)$ of all measurable maps from the unit interval to a compact group $K$ with the topology of convergence in measure;
\item
The full group of the tail equivalence relation with the uniform topology;
\item
The unitary group $U(R)$ of the hyperfinite factor of type $II_1$ with the strong (equivalently: Hilbert-Schmidt) topology.
\end{itemize}

The last example is especially interesting in view of Bekka's theorem quoted above. We do not know whether all of the groups listed above do not have property $(T)$ either. In a separate section, we establish this for the former group in the special case where $K=\T$, the circle rotation group.

Let us recall that a topological group is {\em maximally almost periodic} if finite-dimensional continuous unitary representations separate points.
Every topological subgroup of the unitary group $U(\H)$ with the uniform topology is SIN. For such groups we have:

\begin{theorem}
        Let $G$ be a topological subgroup of the unitary group $U(\H)$ with the uniform operator topology. If $G$ is amenable and has property $(T)$, then $G$ is maximally almost periodic.
\end{theorem}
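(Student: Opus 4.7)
The plan is to combine Bekka's notion of an amenable unitary representation with property $(T)$ to manufacture an abundance of finite-dimensional continuous unitary representations of $G$, working with the tautological inclusion $\pi : G \hookrightarrow U(\H)$, which --- because $G$ carries the norm topology --- is an isometric, hence uniformly continuous, faithful unitary representation.

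First I would check that $\pi$ is amenable in Bekka's sense. Since $G$ is amenable as a topological group there is a left-invariant mean $\mu$ on $\RUCB(G)$; and since $\pi$ is uniformly continuous, for every state $\omega$ on $B(\H)$ and every $T \in B(\H)$ the function $g \mapsto \omega(\pi(g) T \pi(g)^*)$ belongs to $\RUCB(G)$. The formula $\phi(T) := \mu\bigl(g \mapsto \omega(\pi(g) T \pi(g)^*)\bigr)$ then defines a $G$-invariant state on $B(\H)$, so $\pi$ is amenable; equivalently, $\pi \otimes \bar\pi$ almost has invariant vectors in the Hilbert space of Hilbert--Schmidt operators. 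Property $(T)$ now forces $\pi \otimes \bar\pi$ to possess a genuine non-zero invariant vector, i.e.\ a non-zero Hilbert--Schmidt operator $T$ satisfying $\pi(g)T = T\pi(g)$ for every $g$. The positive compact operator $T^{*}T$ is then a non-zero element of the commutant $\pi(G)'$, and each of its spectral projections for a non-zero eigenvalue is a finite-rank projection in $\pi(G)'$ whose range is a non-zero finite-dimensional $\pi(G)$-invariant subspace of $\H$ on which $\pi$ restricts to a continuous finite-dimensional unitary representation.

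Next, let $M \subseteq \H$ be the closed linear span of \emph{all} finite-dimensional $\pi(G)$-invariant subspaces; both $M$ and $M^\perp$ are then $\pi(G)$-invariant. If $M^\perp$ were non-zero, the argument of the previous paragraph --- which uses only amenability of $G$, property $(T)$, and uniform continuity of the representation --- applied to $\pi|_{M^\perp}$ would produce a non-zero finite-dimensional $\pi(G)$-invariant subspace inside $M^\perp$, contradicting the very definition of $M$. Hence $M = \H$. Given any $g \neq e$, faithfulness of $\pi$ gives $\pi(g) \neq I$, so the fixed subspace $\{v \in \H : \pi(g) v = v\}$ is a proper closed subspace of $\H = M$; consequently some finite-dimensional $\pi(G)$-invariant subspace $V$ is not contained in it, and $\pi|_V$ is a continuous finite-dimensional unitary representation of $G$ with $\pi|_V(g) \neq I$, separating $g$ from $e$. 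Therefore $G$ is maximally almost periodic.

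The principal obstacle is the very first step --- transferring amenability of $G$ as a topological group to amenability of the concrete representation $\pi$ in Bekka's sense. Uniform (rather than merely strong) continuity of $\pi$ is essential here, since it is what places the relevant matrix coefficients into $\RUCB(G)$ so that a topological invariant mean can process them; the remainder of the argument is standard spectral theory of Hilbert--Schmidt operators together with a bootstrap onto the orthogonal complement.
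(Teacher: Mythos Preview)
Your strategy coincides with the paper's: show that any uniformly continuous representation of an amenable Kazhdan group contains a finite-dimensional subrepresentation (via amenability of $\pi$ and an invariant Hilbert--Schmidt operator in $\pi\otimes\bar\pi$), then bootstrap on the orthogonal complement of the sum of all finite-dimensional subrepresentations. The paper packages the first step as a separate lemma (its Lemma~\ref{l:fdr}) applicable to any amenable Kazhdan group, passing to a quotient to obtain a SIN group before invoking Theorem~\ref{th:gp}; since your $G$ is already SIN as a subgroup of $U(\H)_u$, your direct argument is a legitimate shortcut.

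There is, however, one genuine gap. Bekka's criterion (Theorem 5.1 of \cite{Bek3}) says that $\pi$ is amenable if and only if $\pi\otimes\bar\pi$ has almost invariant vectors \emph{for the group $G$ viewed as discrete}, i.e.\ $(F,\e)$-almost invariant vectors for every \emph{finite} $F\subseteq G$. Property $(T)$ for the topological group $G$, on the other hand, only guarantees an invariant vector once you have $(Q,\e)$-almost invariant vectors for a \emph{compact} Kazhdan set $Q$. You pass from one to the other without comment. This is exactly the point where the paper does additional work: it first records (Lemma 6.1) that $\pi\otimes\bar\pi$ is again uniformly continuous, then, given a compact Kazhdan set $Q$, chooses a finite $\e/4$-net $F\subseteq Q$ with respect to the bi-invariant operator-norm metric and checks that any $(F,\e/2)$-almost invariant unit vector is automatically $(Q,\e)$-almost invariant. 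So uniform continuity is needed a \emph{second} time, not only to place matrix coefficients into $\RUCB(G)$ as you note, but also to upgrade finite almost-invariance to compact almost-invariance; you should insert this step.

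A minor point: with your formula $\phi(T)=\mu\bigl(g\mapsto\omega(\pi(g)T\pi(g)^{\ast})\bigr)$, conjugation-invariance of $\phi$ translates into \emph{right}-invariance of $\mu$, not left-invariance. Since $G$ is SIN this is harmless (one has both), but it is worth stating.
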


\begin{corollary}
  Let $G$ be a minimally almost periodic topological subgroup of the unitary group $U(\H)$ with the uniform operator topology. If $G$ is amenable, then it does not have property $(T)$.
\end{corollary}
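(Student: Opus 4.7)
The plan is to deduce the corollary by direct contraposition from the preceding Theorem. Assume for contradiction that $G$ is a non-trivial minimally almost periodic topological subgroup of $U(\H)$ with the uniform operator topology, amenable, and having property $(T)$. The hypotheses of the Theorem are then satisfied, so $G$ is maximally almost periodic.

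Next I would combine the two almost periodicity properties. Maximal almost periodicity says that for every $g \in G$ with $g \neq e$ there exists a continuous finite-dimensional unitary representation $\pi$ of $G$ with $\pi(g) \neq \Id$; in particular such a $\pi$ is non-trivial. On the other hand, minimal almost periodicity says there are no non-trivial continuous finite-dimensional unitary representations of $G$ at all. These two statements are compatible only when $G = \{e\}$, contradicting the standing assumption that $G$ is non-trivial.

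There is essentially no obstacle beyond invoking the Theorem and noting this incompatibility; the only delicate point is the trivial case $G=\{e\}$, which by convention is excluded (or alternatively, one observes that for $G=\{e\}$ minimal almost periodicity is a vacuous hypothesis and the statement has no content).
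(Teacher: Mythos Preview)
Your argument is correct and matches the paper's reasoning: the corollary is stated there without proof, as an immediate consequence of the preceding theorem, and your write-up simply unpacks that implication. Your remark about the degenerate case $G=\{e\}$ is apt; the paper tacitly excludes it (compare the parallel corollary in the introduction, where the hypothesis ``admitting a non-trivial unitary representation'' is made explicit).
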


For example, the group $U_C(\ell^2)$ of unitary operators of the form ${\mathbb{I}}+K$, where $K$ is compact, with the uniform operator topology does not have property $(T)$.

Two of the above examples allow, for the first time, to distinguish between the property $(T)$ and the property $(FH)$ in the class of Polish groups. 

A topological group $G$ has property $(FH)$ if every continuous action of $G$ by affine isometries of a Hilbert space has a fixed point. It is always true that $(T)\Rightarrow (FH)$, and the converse implication holds for sigma-compact locally compact groups (the Delorme-Guichardet theorem, cf. \cite{BdlHV}, Th. 2.12.14). For more general topological groups the two properties have been distinguished: as observed e.g. in \cite{dC}, every (necessarily uncountable) discrete group having Bergman's property \cite{bergman} has property $(FH)$, while it is well-known and easily proved that a discrete group with property $(T)$ is finitely generated (cf. \cite{BdlHV}, Th. 1.3.1). An even stronger property than $(FH)$ is the property $(OB)$ studied by Rosendal \cite{rosendal}.
A topological group has property $(OB)$ if every continuous left-invariant pseudometric on $G$ is bounded. Equivalently, every orbit of each continuous action of $G$ by isometries on a metric space is bounded. 
The Polish groups $L^0((0,1),\T)$ and $U_C(\ell^2)$ do not have property $(T)$, yet both have property $(OB)$. 

Next we disprove a 2003 conjecture of Bachir Bekka \cite{Bek3}. The pre-eminent example of a Kazhdan group which is not strongly Kazhdan is the circle rotation group $\T=U(1)$, easily shown to have no finite Kazhdan subset. More generally, the same holds for every compact group which is amenable in the discrete topology (Bekka, {\em ibid.}) 
On the other hand, known examples of minimally almost periodic locally compact groups with property $(T)$ such as $SL_3(\R)$ are strongly Kazhdan. Bekka has proved that this is no coincidence: a locally compact Kazhdan group $G$ is strongly Kazhdan if and only if the Bohr compactification of $G$ is strongly Kazhdan. In particular, if a locally compact group $G$ is minimally almost periodic (so its Bohr compactification $bG$ is trivial) and Kazhdan, then it must be strongly Kazhdan.
Bekka has conjectured that the criterion remains true for arbitrary topological groups ({\em ibid.}, p. 512). 

However, we show a rather natural example of a complete (non-metrizable) topological group $G$ which has property $(T)$, is minimally almost periodic, yet does not have the strong property $(T)$. 
This group is obtained starting from any infinite simple Kazhdan group $\Gamma$ as an intermediate subgroup between the direct sum and direct product of countably infinitely many copies of $\Gamma$, equipped with the chain topology with regard to a sequence of compact sets of the form $B_n^{\N}$ (powers of the balls in the word metric on $\Gamma$). 

It is well-known that the product of finitely many topological groups with Kazhdan's property $(T)$ is again Kazhdan.
As our concluding result, we show that the same holds for arbitrary infinite products of Kazhdan groups equipped with the usual product topology.

\section{Some properties of topological groups}

Let $G$ be a (Hausdorff) topological group.

\subsection{Uniformities}
The {\em left uniformity} on $G$ is determined by the entourages of the diagonal
\[V_L=\{(x,y)\in G\times G\colon x^{-1}y\in V\},\]
where $V$ runs over a neighbourhood basis of the identity. The {\em right uniformity,} similarly, is given by the entourages of the form
\[V_R=\{(x,y)\in G\times G\colon xy^{-1}\in V\}.\]
The {\em two-sided} uniformity on $G$ is the supremum of the left and the right uniformities, and its basis of entourages of the diagonal consists of sets
\[V_L\cap V_R = \{(x,y)\in G\times G\colon x^{-1}y\in V\mbox{ and }xy^{-1}\in V\}.\]
All three uniformities are compatible with the topology of the group.

\subsection{Groups with Small Invariant Neighbourhoods}
A topological group $G$ is called a {\em group with small invariant neighbourhoods,} or a {\em SIN group}, if the left and right uniform structures of $G$ coincide. This is the case if and only if conjugation-invariant neighbourhoods form a basis at identity of $G$.
Equivalently, SIN groups are those whose topology is determined by the family of all continuous bi-invariant pseudometrics on $G$. In particular, is $G$ is a metrizable SIN group, there is always a compatible bi-invariant metric on $G$.

Every abelian or compact group has small invariant neighbourhoods, but there are numerous examples of infinite-dimensional SIN groups of importance outside of these two classes.

\subsection{Completeness}
The group $G$ is {\em complete} if it is complete with regard to the two-sided uniformity. An equivalent condition is that $G$ is closed in every topological group containing $G$ as a topological subgroup. Every topological group $G$ embeds as a dense topological subgroup in a unique complete group, $\hat G$, called the {\em completion} of $G$.

For more on uniform structures and completeness, see \cite{RD}.

\subsection{Polish groups}
A topological group $G$ is {\em Polish} if it is complete, metrizable and separable. The class of Polish groups is, in a sense, the second most natural class of topological groups beyond the locally compact case, see e.g. \cite{BK}. At the same time, not all concrete infinite-dimensional groups of importance are necessarily Polish.

\subsection{Amenability}
Denote by $\RUCB(G)$ the collection of all complex-valued bounded right uniformly continuous functions on $G$. A 
topological group $G$ is {\em amenable} if it satisfies one of the following equivalent conditions.

\begin{itemize}
\item There is a left-invariant mean on the space $\RUCB(G)$.
\item Every continuous action of $G$ on a compact space $X$ admits an invariant Borel probability measure on $X$.
\item Every affine continuous action of $G$ on a compact convex set has a fixed point.
\end{itemize}

The following in a general context was observed by de la Harpe \cite{dlH}, see also \cite{paterson}.

\begin{theorem}
Amenability is preserved under passing to (i) the union of an increasing chain of topological subgroups; (ii) the completion; (iii) an everywhere dense subgroup.
\label{th:union}
\end{theorem}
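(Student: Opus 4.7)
The plan is to prove each clause via whichever of the equivalent characterizations of amenability is best suited. For (i), letting $G=\bigcup_\alpha G_\alpha$ act continuously and affinely on a compact convex set $K$, each fixed-point set $K^{G_\alpha}$ is non-empty (by amenability of $G_\alpha$), closed, and convex, and the family is nested as $\alpha$ grows along the chain; compactness of $K$ forces $\bigcap_\alpha K^{G_\alpha}\ne\emptyset$, and any point in this intersection is fixed by all of $G=\bigcup_\alpha G_\alpha$. For (ii), given a continuous affine $\hat G$-action on a compact convex $K$, amenability of $G$ yields a $G$-fixed point $x$; then for any $g\in\hat G$ and any net $g_\beta\to g$ in $G$, continuity of the action gives $g\cdot x=\lim_\beta g_\beta\cdot x=x$, so $x$ is $\hat G$-fixed.

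Part (iii) is more delicate, and I would handle it via invariant means. Assume $G$ is amenable and $H\le G$ is dense. Since $H$ is a topological subgroup, the right uniformity it inherits from $G$ coincides with its own group right uniformity, and $H$ is dense in $G$ in this uniformity. Using completeness of $\C$, the standard extension lemma produces, for each $\varphi\in\RUCB(H)$, a unique $\tilde\varphi\in\RUCB(G)$ extending $\varphi$; the assignment $\varphi\mapsto\tilde\varphi$ is a positive isometric linear bijection onto $\RUCB(G)$, and uniqueness of the extension forces $\widetilde{L_h\varphi}=L_h\tilde\varphi$ for every $h\in H$. A left-$G$-invariant mean $m$ on $\RUCB(G)$ then pulls back to a left-$H$-invariant mean $\varphi\mapsto m(\tilde\varphi)$ on $\RUCB(H)$, proving $H$ amenable.

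The hardest step will be the careful verification in (iii): that $\tilde\varphi$ is genuinely bounded and right-uniformly continuous on $G$, that $\varphi\mapsto\tilde\varphi$ intertwines left $H$-translation, and that the right uniformities on the right groups are being tracked correctly throughout. These are standard uniform-space facts (cf.\ \cite{RD}), but setting them up cleanly is where the real content of the argument lies. The symmetric converse of (iii)—amenability of a dense $H$ forces amenability of $G$—follows at once from the argument of (ii) applied to $(H,G)$ in place of $(G,\hat G)$, so the theorem in fact holds in a stronger form in which amenability is invariant under dense embeddings in both directions.
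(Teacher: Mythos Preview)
The paper does not actually prove this theorem: it is stated with attribution to de la Harpe \cite{dlH} and Paterson \cite{paterson}, with no argument given. So there is no in-paper proof to compare against; your task reduces to whether the argument you supply is sound, and it is.

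Your treatments of (i) and (ii) via the fixed-point characterization are the standard ones and are correct as written. For (iii), the route through $\RUCB$ is also the standard one and you have the right ingredients: the subspace right uniformity on $H$ agrees with its intrinsic right uniformity (since identity neighbourhoods in $H$ are exactly $V\cap H$ for $V$ an identity neighbourhood in $G$), so every $\varphi\in\RUCB(H)$ extends uniquely to some $\tilde\varphi\in\RUCB(G)$ by the usual extension lemma for uniformly continuous maps into a complete space; the extension map is a positive unital isometric isomorphism with inverse given by restriction; left translation by a fixed $h\in H$ is uniformly continuous for the right uniformity on both $H$ and $G$ (the required entourage is $h^{-1}Vh$), so $L_h\tilde\varphi\in\RUCB(G)$ and agrees with $\widetilde{L_h\varphi}$ on the dense set $H$, hence everywhere; and then $\varphi\mapsto m(\tilde\varphi)$ is the desired $H$-invariant mean. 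Your closing remark that (ii) already yields the converse of (iii) is also correct, since the density argument in (ii) uses nothing about $\hat G$ beyond $G$ being dense in it.

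One small stylistic comment: you flag the verification in (iii) as ``the hardest step'' and defer it to \cite{RD}, but in fact the checks are short enough (essentially what is sketched above) that you could simply include them rather than signalling them as a difficulty.
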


\subsection{Extreme amenability}
The following notion is a peculiarity of the non locally compact case. A topological group $G$ is {\em extremely amenable} if every continuous action of $G$ on a compact space admits a fixed point. The book \cite{P06} is a reference to this concept. Note that every extremely amenable topological group is minimally almost periodic, that is, admits no continuous finite dimensional unitary representations (because for every such representation $\pi\colon G\to U(n)$ the composition of $\pi$ and the action by left translations would give an action of $G$ on the compact space $U(n)$ without fixed points). 

\subsection{Property $(T)$}
The notion of a Kazhdan group makes perfect sense beyond the locally compact case. A unitary representation $\pi$ of a topological group $G$ in a Hilbert space $\H$ {\em has almost invariant vectors} if for every $\e>0$ and every compact $K\subseteq G$ there is a $\xi\in\H$ with $\norm\xi=1$ and $\norm{\xi-\pi_g\xi}<\e$ for all $g\in K$. A topological group $G$ has property $(T)$ if every strongly continuous unitary representation of $G$ having almost invariant vectors has an invariant vector (of norm one). In this case, there is necessarily a compact {\em Kazhdan set,} $Q\subseteq G$, and a {\em Kazhdan constant,} $\e>0$, so that if there is a $(Q,\e)$-almost invariant vector (in an obvious sense), $\xi$, of norm one, then there is an invariant vector of norm one. If a Kazhdan group admits a {\em finite} Kazhdan set $Q$, then it is called {\em strongly Kazhdan} (or: having {\em strong property $(T)$}).

The book \cite{BdlHV} is an encyclopedic reference to the subject.

\subsection{Property $(FH)$}
A topological group $G$ has {\em property $(FH)$} if every continuous action of $G$ by affine isometries on a Hilbert space has a fixed point. An equivalent condition is: orbits of every continuous action of $G$ by affine isometries on a Hilbert space are bounded. (Cf. ``Lemma of the centre'' in \cite{BdlHV}, Section 2.2.)

For second countable locally compact groups $G$ the properties $(T)$ and $(FH)$ are equivalent, which result is known as the {\em Delorme--Guichardet theorem,} see Ch. 2 in \cite{BdlHV}.
While the implication $(T)\Rightarrow (FH)$ (due to Delorme \cite{delorme}) holds for every topological group, the converse implication $(FH)\Rightarrow (T)$ (established by Guichardet \cite{guichardet} for sigma-compact locally compact groups) is in general invalid already for uncountable discrete groups. 

A (discrete) group $G$ has {\em Bergman's property} \cite{bergman} if, whenever $G$ is represented as a union of an increasing countable chain $(W_i)_{i=1}^\infty$ of subsets, a suitable finite power of some $W_i$ equals $G$. (For instance, the group $S_\infty$ viewed as a discrete group has Bergman property \cite{bergman}, and the collection of known such groups is growing fast, including, in particular, the unitary group $U(\ell^2)$ \cite{RR}, and many others.) 
An equivalent reformulation of Bergman property is: the orbits of every action of $G$ by isometries on a metric space are bounded. 

As a consequence, noted in particular in \cite{dC}, every uncountable discrete group $G$ with Bergman property has property $(FH)$ but not property $(T)$, because a discrete group with property $(T)$ is finitely generated (cf. \cite{BdlHV}, Th. 1.3.1.)

\subsection{Property $(OB)$}
The following natural extension of Bergman property to topological groups was studied by Rosendal \cite{rosendal}. Say
that a topological group $G$ has {\em property $(OB)$} if for every continuous action of $G$ on a metric space by isometries all orbits are bounded. If a group has the Bergman property as a discrete group (e.g. $S_\infty$ and $U(\ell^2)$), it has the property $(OB)$. A topological group $G$ is {\em bounded} in the sense of Hejcman \cite{hejcman} and Atkin \cite{atkin} if for every neighbourhood of identity $V$ in $G$ there are a finite subset $F\subset G$ and a natural number $n$ with $FV^n=G$. An example of a bounded group is $U(\ell^2)_u$ \cite{atkin}. Every bounded group has property $(OB)$. 

The property $(OB)$ clearly implies property $(FH)$. The converse in general fails, e.g. consider any infinite discrete Kazhdan group such as $SL_3(\Z)$.

For many Polish groups with property $(OB)$ it remains unknown whether or not they have property $(T)$. This is the case e.g. for the unitary group $U(\ell^2)$ with the uniform operator topology, the group $\Aut(X,\mu)$ of measure-preserving transformations of a standard Lebesgue measure space with the weak topology, etc. 

In this paper we will distinguish between properties $(FH)$ and $(T)$ in the class of Polish groups, by proving that two among the most common examples of Polish groups with property $(OB)$ do not have property $(T)$. 

Here is a useful simple lemma to help establish property $(OB)$ of some groups.
 
 \begin{lemma}
 Let a topological group $G$ contain an increasing chain of subgroups $K_n$ whose union is dense. Suppose that $G$ is metrizable with a left-invariant metric $d$ and that for every $\e>0$ there exists a finite $N$ with the property that the $N$-th power of the $\e$-neighbourhood of identity in $K_n$ formed with regard to $d$ is all of $K_n$. Then $G$ is bounded and in particular has property $(OB)$. 
 \label{l:ob}
 \end{lemma}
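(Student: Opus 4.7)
The plan is to establish the stronger statement that $G$ is bounded in the sense of Hejcman, since the excerpt already records that every bounded topological group has property $(OB)$. In fact the argument will yield the mildly stronger conclusion that some power $V^{N+1}$ of an arbitrary neighbourhood of the identity already exhausts $G$, with no finite translating set needed.

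Given a neighbourhood $V$ of the identity in $G$, I would use that $d$ is compatible with the topology to pick $\e > 0$ with $B_d(e,\e) \subseteq V$, and then invoke the hypothesis with parameter $\e/2$ to obtain a single natural number $N$ such that $\bigl(B_d(e,\e/2) \cap K_n\bigr)^N = K_n$ for every $n$. For an arbitrary $g \in G$, density of $\bigcup_n K_n$ furnishes an index $n_0$ and an element $k \in K_{n_0}$ with $d(k, g) < \e/2$. I would factor $k = k_1 \cdots k_N$ with each $k_i \in B_d(e,\e/2) \cap K_{n_0} \subseteq V$; left-invariance of $d$ then yields $d(e, k^{-1}g) = d(k, g) < \e/2$, so that $k^{-1}g \in V$ as well. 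Combining, $g = k_1 \cdots k_N \cdot (k^{-1}g) \in V^{N+1}$, and hence $V^{N+1} = G$, establishing boundedness of $G$.

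The only delicate point is the uniformity of $N$ across all the subgroups $K_n$, which is precisely what allows the density approximation to cost only one extra factor of $V$ rather than spoiling the bound. Apart from recognising this and using left-invariance of $d$ to transfer the bound $d(k,g) < \e/2$ into membership of $k^{-1}g$ in $V$, the argument is a one-step approximation and I do not anticipate any serious obstacle.
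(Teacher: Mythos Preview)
Your argument is correct and is essentially the paper's own proof, carried out element by element rather than in set-theoretic shorthand: the paper simply notes that $V_\e^N$ contains $\bigcup_n K_n$ by hypothesis and then that $V_\e^{N+1}$ contains its closure, which is $G$. The halving of $\e$ is harmless but unnecessary, since $B_d(e,\e)\subseteq V$ already suffices for both the factorisation of $k$ and the remainder $k^{-1}g$.
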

 
 \begin{proof}
 Denote by $V_\e$ the $\e$-ball around identity in $G$. By lemma's assumptions, for some $N$ the $N$-th power of $V_\e$ will contain the union of $K_n$, $n\in\N$, and thus the $(N+1)$-th power of $V_\e$ will contain the closure of $\cup_{i=1}^\infty K_n$, that is, $G$. 
 \end{proof}

\section{Some examples of topological groups}
\label{s:examples}

\subsection{$\Homeo_+(\I)$\label{homeo}} The group $\Homeo_+(\I)$ consists of all orientation-preserving (or: endpoint-preserving) homeomorphisms of the closed unit interval $\I=[0,1]$ and is equipped with the compact-open topology. This group is isomorphic, as a topological group, to the group $\Homeo_+(\R)$ with the compact-open topology. The group is Polish, not SIN, and extremely amenable \cite{P1}, in particular amenable. As proved by Megrelishvili \cite{megrelishvili}, the Polish group $\Homeo_+(\R)$ admits no non-trivial strongly continuous representations by isometries in reflexive Banach spaces, in particular, no nontrivial unitary representations. For this reason, it has property $(T)$ in a trivial way.
The group $\Homeo_+(\I)$ is bounded (\cite{atkin}, Remark 6.8(b)). 

\subsection{$U(\H)_s$} The group $U(\H)$ of unitary operators on a Hilbert space $\H$, equipped with the strong operator topology, will be denoted $U(\H)_s$. This group is complete and amenable for every $\H$ (for $\H$ infinite-dimensional it follows from Th. \ref{th:union} because $U(\H)$ is then approximated by unitary subgroups of finite rank, cf. \cite{dlH}). If $\dim\H$ is infinite, then $U(\H)_s$ is extremely amenable \cite{GrM}.
The group is compact if and only if $\dim\H<\infty$, and the same is true of the SIN property. The group $U(\H)_s$ has property $(OB)$, which can be easily verified directly. 
If $\H=\ell^2$ is separable infinite-dimensional, then $U(\ell^2)$ has property $(T)$ (Bekka \cite{Bek3}). For $\H$ finite-dimensional, property $(T)$ follows from compactness.

\subsection{$U(\H)_u$} The uniform operator topology on the same group $U(\H)$ will be marked with a subscript $u$. The group $U(\H)_u$ is complete and SIN. If  $\dim\H=\infty$, the group $U(\H)_u$ is non-separable, nor is it amenable \cite{dlH}. The group $U(\H)_u$ is bounded (\cite{atkin}, (3.5)), so has property $(OB)$.
It remains unknown if $U(\ell^2)_u$ has property $(T)$ \cite{Bek3}.

\subsection{$U_C(\H)$\label{uch}}
The Fredholm unitary group $U_C(\H)$ of all compact perturbations of the identity, that is, of all unitary operators $\I+K$, where $K$ is a compact operator in a Hilbert space $\H$, equipped with the uniform operator topology, is a Polish SIN group.  
This group is extremely amenable (\cite{GrM}, see also \cite{P06}, Corol. 4.1.17). It is bounded, so has property $(OB)$ (cf. Remark 6.9 in \cite{atkin_hokkaido}, and Proposition 6.5 and Remark 6.8 in \cite{atkin}). In particular, the Polish group $U_C(\H)$ has property $(FH)$, while we will show that it does not have property $(T)$.

\subsection{$U(\infty)_2$\label{uinfty2}} The group $U(\infty)_2$ consists of all unitary operators on $\ell^2$ of the form $\I+T$, where $T$ is of Schatten class $2$. Equipped with the (non-normalized) Hilbert-Schmidt metric, this is a Polish SIN group. It is extremely amenable (Gromov and Milman \cite{GrM}, see also \cite{P06}, Corol. 4.1.13), and SIN. As will be shown, this group does not have property $(T)$. The Hilbert-Schmidt metric on this group is bi-invariant and unbounded, and so the group does not have property $(OB)$. We do not know if this group has property $(FH)$.

\subsection{$U(M)_s$\label{ss:ums}} Let $M$ be a von Neumann algebra. The unitary group $U(M)$ of $M$ will be equipped with the strong operator topology, which coincides with the weak$^\ast$ topology $\sigma(M,M_\ast)$ with regard to the predual $M_\ast$. This topology is always complete, and is Polish if and only if $M$ is separable. 
The group $U(M)_s$ is amenable if and only if $M$ is injective \cite{dlH79,haagerup}, in which case $U(M)_s$ decomposes into a direct product of an extremely amenable group and a compact group \cite{GP}. 
In some cases, $U(M)_s$ has property $(T)$ (e.g. where $M=B(\ell^2)$ and $U(M)_s=U(\ell^2)_s$, Bekka's theorem), in some others, does not (if $M=L^\infty(0,1)$, we will show that the corresponding unitary group $U(M)_s=L^0(X,\mu;\T)$, cf. \ref{loxg} below, does not have property $(T)$).
It is unclear whether property $(T)$ of the topological group $U(M)_s$ and the property $(T)$ of the von Neumann algebra $M$ as defined in \cite{connes_jones} are linked in any way.

\subsection{$U(M)_2$\label{ur2}} Suppose that $M$ is a von Neumann factor of type $II_1$. Then the strong operator topology on $U(M)$ coincides with the topology given by the Hilbert-Schmidt norm 
\[\norm x_2 = \tau(x^\ast x)^{1/2},\]
where $\tau$ is the unique normalized trace on $M$ (cf. e.g. Prop. 9.1.1 in \cite{jones}). We will denote the unitary group by $U(M)_2$. This group is SIN, in fact the Hilbert-Schmidt distance is bi-invariant. (As pointed out to me by Philip Dowerk and Andreas Thom, and independently by Sven Raum, a von Neumann algebra $M$ is finite if and only if the group $U(M)_s$ is SIN.) 

In the separable case, amenability of $U(M)_2$ means $M=R$ is the hyperfinite factor of type $II_1$, in which case the group $U(R)_2$ is in fact extremely amenable \cite{GP}. We will show that the group $U(R)_2$ does not have strong property $(T)$, however we were unable to verify whether it has property $(T)$. It follows from Lemma \ref{l:ob} that $U(R)_2$ is a bounded group: a finite power of each neighbourhood of identity is the entire group.

The tracial ultrapower $R^\omega$ of the hyperfinite factor $R$ over a non-principal ultrafilter on the integers has attracted a growing attention recently in view of Connes' Embedding Conjecture \cite{connes-injective,CL}. We do not know whether the unitary group $U(R^\omega)_2$ is Kazhdan or even strongly Kazhdan. The boundedness property of $U(R)_2$ as stated above is preserved by metric ultraproducts, and consequently $U(R^\omega)_2$ is a bounded group. 

\subsection{$L^0(X,\mu;G)$\label{loxg}} Let $(X,\mu)$ be a standard Borel space equipped with a probability measure, and let $G$ be a locally compact Polish group. Denote by $L^0(X,\mu; G)$ the collection of all (equivalence classes of) Borel measurable maps from $X$ to $G$, equipped with the pointwise group operations and the topology of convergence in measure. The group $L^0(X,\mu; G)$ is Polish. It is SIN if and only if $G$ is SIN, in particular if $G$ is abelian or compact. 

If the locally compact group $G$ is amenable and the measure $\mu$ is diffuse (non-atomic), then $L^0(X,\mu;G)$ is extremely amenable (\cite{P02}; \cite{P06}, p. 158). (For compact $G$ it was proved by Glasner \cite{Gl1} and, independently, by Furstenberg--Weiss, unpublished. The problematics was initiated by Herer--Christensen \cite{HC} for $G=U(1)$ and the so-called pathological diffuse submeasures. For further developments in the case of submeasures, see \cite{FS,sabok}.) It follows that for every measure $\mu$ and an amenable locally compact $G$, the group $L^0(X,\mu;G)$ is amenable. The case where $G=U(1)$ is the circle rotation group overlaps with Example \ref{ss:ums}, because the group $L^0(X,\mu;U(1))$ is isomorphic to the unitary group $U(L^\infty(0,1))_s$. We will show that this group does not have property $(T)$, while it is easy to see that it has property $(OB)$, and consequently property $(FH)$.

\subsection{$\Aut^\ast(X,\mu)_u$} The group of measure class preserving automorphisms of a standard Borel space with probability measure $\mu$ is equipped with the uniform distance
\begin{equation}
d_u(\sigma,\tau)=\mu\{x\in X\colon \sigma(x)\neq\tau(x)\}.
\label{eq:uniform}
\end{equation}
The group is complete. If the measure $\mu$ is diffuse, the group is non-separable. In the case of diffuse measure,
it is also unknown whether the group $\Aut^\ast(X,\mu)_u$ is either amenable or even extremely amenable (asked in \cite{GP}), or whether it has property $(T)$.
The group is not SIN. It has the Bergman property \cite{DHU}, therefore the property $(OB)$.

\subsection{$\Aut^\ast(X,\mu)_w$} The weak topology on the group $\Aut^\ast(X,\mu)$ is the strong operator topology determined by the quasi-regular representation $\gamma$ of the group in the space $L^p(X,\mu)$, $1\leq p<\infty$, defined 
for every $\tau\in\Aut^\ast (X,\mu)$ and $f\in L^p(X)$ by
\[\gamma_\tau(f)(x)=f(\tau^{-1}x)
\root p\of{\frac{d \mu\circ\tau^{-1}}{d\mu}(x)},\]
where $\mu\circ\tau^{-1}=\tau_\ast(\mu)$ is the pushforward measure, and
$d/d\mu$ the Radon--Nikod\'ym derivative. We mark the weak topology with the subscript $w$. The group $\Aut^\ast(X,\mu)_w$ is Polish. If $\mu$ is diffuse, then $\Aut^\ast(X,\mu)_w$ is extremely amenable \cite{GP}. From here it is easy to deduce, with a view of Example \ref{ss:sinfty}, that for every measure $\mu$ the group $\Aut^\ast(X,\mu)_w$ is amenable. 
In the case of a diffuse measure $\mu$, it is unknown whether $\Aut^\ast(X,\mu)_w$ has property $(T)$. 

\subsection{$S_\infty$\label{ss:sinfty}} In a case where the measure space $(X,\mu)$ is purely atomic, the uniform and the weak topology on the group $\Aut^\ast(X,\mu)$ coincide. If in addition $X$ has infinitely many atoms, the topological group $\Aut^\ast(X,\mu)$ is isomorphic to the full symmetric group $S_\infty$ of all bijections of a countably infinite set $\omega$, equipped with the topology of simple convergence on the discrete space $\omega$. The group $S_\infty$ is Polish, not SIN, and amenable (as it can be approximated by a chain of finite permutation groups). This group is not extremely amenable (as first observed in \cite{P1}), and some of its topological dynamical properties are now well understood \cite{Gl-W1}, see also \cite{P06}, Sect. 6.3. The same proof as in Bekka's article \cite{Bek3}, using the result from \cite{lieberman}, shows that $S_\infty$ (as a Polish group) has property $(T)$. This can be now deduced from a very general result by Tsankov \cite{tsankov}.

\subsection{$\Aut(\Q,\leq)$} The group of all order-preserving bijections of the rationals with the topology induced from $S_{\infty}$ is a Polish group. It is extremely amenable \cite{P1} and has strong property $(T)$ \cite{tsankov}. Just like $S_{\infty}$, the group $\Aut(\Q,\leq)$ has the Bergman property as a discrete group \cite{DH} and consequently the property $(OB)$.

\subsection{$\Aut(X,\mu)_u$\label{ss:mp}}
The group $\Aut(X,\mu)$ of measure-preserving transformations of a measure space $(X,\mu)$ equipped with the uniform metric as in Eq. (\ref{eq:uniform})
and the corresponding topology is a complete SIN group. If $\mu$ is diffuse, the group is non-separable, and it is non-amenable \cite{GP}. This topological group clearly has property $(OB)$, in fact even the Bergman property \cite{DHU}. We do not know whether it has property $(T)$, or even strong property $(T)$.

\subsection{$\Aut(X,\mu)_w$\label{ss:mpw}}
The same group equipped with the weak topology is extremely amenable \cite{GP}. We do not know if it is Kazhdan or strongly Kazhdan.

\subsection{$[{\mathscr R}]$\label{full}}
Let $\mathscr R$ be a measure class preserving ergodic countable Borel equivalence relation on a standard Borel measure space $X$ equipped with a diffuse probability measure $\mu$. Denote by $[{\mathscr R}]$ the full group of $\mathscr R$ in the sense of Dye \cite{dye}, that is, a subgroup of $\Aut^\ast(X,\mu)$ formed by all transformations $\tau$ with the property $(x,\tau(x))\in {\mathscr R}$ for a.e. $x\in X$.
When equipped with the uniform topology as a subgroup of $\Aut^\ast(X,\mu)$, the group $[{\mathscr R}]$ is Polish. The group $[{\mathscr R}]$ is amenable if and only if it is extremely amenable if and only if the relation $\mathscr R$ is hyperfinite \cite{GP}. If the relation $\mathscr R$ is measure-preserving, the 
full group $[{\mathscr R}]$ is SIN. If $\mathscr R$ is hyperfinite, the full group has the Bergman property \cite{DHU}.
We will show that the full group of a hyperfinite measure-preserving equivalence relation does not have strong property $(T)$, and we do not know whether it has property $(T)$.

\section{Amenable representations}

A unitary representation $\pi$ of a group $G$ in a Hilbert space $\H$ is {\em amenable} (in the sense of Bekka \cite{Bek1}) if there exists
a state $\phi$ on the von Neumann algebra ${\mathcal B}(\H)$ 
invariant under the action of $G$ by conjugations: for all $T\in {\mathcal B}(\H)$ and all $g\in G$, one has
\[\phi(T) = \phi(\pi^\ast_g T\pi_g).\]
An useful equivalent reformulation in the language of classical invariant means on function spaces can be found in \cite{P00}, Theorem 7.6. Denote by $\s_\pi$ the unit sphere in the Hilbert space of representation, $\H_{\pi}$, and by $\UCB(\s_{\pi})$ the Banach space of all bounded uniformly continuous functions on the sphere made into a $G$-module in an obvious way. Then $\pi$ is amenable if and only if there is a $G$-invariant mean on $\UCB(\s_{\pi})$. 

Every strongly continuous unitary representation of an amenable locally compact group is amenable \cite{Bek1}. This is no longer true for non-locally compact topological groups. 

\begin{example} The standard representation $\pi$ of the amenable group $U(\ell^2)_s$ in $\ell^2$ is non-amenable, for the following reason. If we identify the standard orthonormal basis in $\ell^2$ with the underlying set of the group $F_2$, the left regular representaion of the latter group, $\lambda_{F_2}$, determines an embedding $F_2\hookrightarrow U(\ell^2)$.
For every $f\in L^\infty(F_2)$ denote by $ m_f$ the corresponding multiplication operator on $\ell^2=\ell^2(F_2)$. The correspondence $f\mapsto m_f$ is a representation of 
the von Neumann algebra $L^\infty(F_2)$ in $\B(\ell^2)$. If we assume that $\pi$ is amenable, with an invariant state $\phi$, then the rule $\psi(f)= \phi( m_f)$ determines an invariant mean on $F_2$, which is a contradiction.
(More generally, the left regular representation of a locally compact group $G$ is amenable if and only if $G$ is amenable \cite{Bek1}.)
\end{example}

At the same time, as noticed in \cite{GP}, the result still holds true in the class of SIN groups.

\begin{theorem}[Giordano and Pestov \cite{GP}] Every strongly continuous unitary representation $\pi$ of an amenable SIN topological group $G$ is amenable. 
\label{th:gp}
\end{theorem}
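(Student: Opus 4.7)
The plan is to transport the amenability of $G$ to an invariant mean on $\UCB(\s_\pi)$ via an orbit map. Fix any unit vector $\xi_0\in\s_\pi$ and consider the orbit map $\Phi\colon G\to\s_\pi$, $g\mapsto\pi_g\xi_0$. Since each $\pi_g$ is unitary,
\[\norm{\pi_x\xi_0-\pi_y\xi_0}=\norm{\pi_{x^{-1}y}\xi_0-\xi_0},\]
so strong continuity of $\pi$ at the identity shows that $\Phi$ is left uniformly continuous from $G$ into the Hilbert space $\H_\pi$ with the norm uniformity.

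This is where the SIN hypothesis enters: since the left and right uniformities on $G$ coincide, $\Phi$ is automatically right uniformly continuous as well. Concretely, given $\varepsilon>0$, one picks a conjugation-invariant neighbourhood $V$ of $e$ with $\norm{\pi_g\xi_0-\xi_0}<\varepsilon$ for $g\in V$; if $xy^{-1}\in V$, then $y^{-1}x=y^{-1}(xy^{-1})y\in V$ by conjugation-invariance, so $\norm{\pi_x\xi_0-\pi_y\xi_0}<\varepsilon$. Consequently the pullback
\[\Phi^*\colon \UCB(\s_\pi)\to\RUCB(G),\qquad f\mapsto f\circ\Phi,\]
is a well-defined unital positive linear operator. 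It also intertwines the $G$-actions modulo a left translation: with the natural module structure $(g\cdot f)(\xi)=f(\pi_{g^{-1}}\xi)$, a direct computation gives
\[\Phi^*(g\cdot f)(h)=f(\pi_{g^{-1}h}\xi_0)=(\Phi^* f)(g^{-1}h),\]
i.e.\ $\Phi^*(g\cdot f)=L_g(\Phi^* f)$, where $L_g$ denotes left translation on $G$.

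Now let $m$ be a left-invariant mean on $\RUCB(G)$, which exists by amenability of $G$, and set $M(f)=m(\Phi^* f)$. Then $M$ is manifestly a mean on $\UCB(\s_\pi)$, and the intertwining identity combined with the left-invariance of $m$ yields
\[M(g\cdot f)=m(L_g\Phi^* f)=m(\Phi^* f)=M(f)\]
for every $g\in G$. By the characterization of amenable representations in terms of invariant means on $\UCB(\s_\pi)$ recalled just before the statement, this proves that $\pi$ is amenable.

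The only nontrivial point is the passage from left to right uniform continuity of the orbit map; this is precisely where the SIN assumption is needed, since without it the pullback $\Phi^* f$ need not lie in $\RUCB(G)$, and the averaging argument breaks down. Everything else is routine verification, amounting to a change-of-variables identity and the bare definition of amenability for $G$.
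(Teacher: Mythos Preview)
Your proof is correct and is essentially the same argument as the paper's: pull back $\UCB(\s_\pi)$ along the orbit map $g\mapsto\pi_g\xi_0$, use SIN to land in $\RUCB(G)$ (the paper phrases this as $\LUCB(G)=\RUCB(G)$), and push forward a left-invariant mean. Your write-up is simply more explicit about the uniformity computation and the intertwining identity; the underlying idea is identical.
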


\begin{proof}
Choose an artibrary vector $\xi$ in the unit sphere $\s_\pi$ of the Hilbert space of representation.
For every bounded uniformly continuous function $f\colon \s_{\pi}\to\C$ define a function $\gamma(f)\colon G\to\C$ by
\[\gamma(f)(g) = f(\pi_g(\xi)).\]
The resulting correspondence $\gamma$ is a linear positive operator of norm one commuting with the action of $G$ from the Banach $G$-module $\UCB(\s_{\pi})$ to the Banach $G$-module $\LUCB(G)$ of all left uniformly continuous bounded functions on $G$. Since $G$ is a SIN group, $\LUCB(G)=\RUCB(G)$, and since $G$ is amenable, there exists a left-invariant mean $\phi\colon \RUCB(G)\to\C$. The image $\phi\circ\gamma\colon \UCB(\s_{\pi})\to\C$ of $\phi$ under the dual operator $\gamma^\ast$ is a $G$-invariant mean on the sphere. 
\end{proof}

\begin{example} Since the unitary group $U(\ell^2)_u$ with the uniform operator topology is SIN, it cannot be amenable in view of the above Theorem \ref{th:gp}, since the tautological representation is non-amenable. This is an alternative proof of de la Harpe's result.
\end{example}

Here is another illustration of the above technique, generalizing a theorem of Valette \cite{valette}.

\begin{theorem}
Let $G$ be a topological group with the following properties:
\begin{enumerate}
\item $G$ is SIN, and
\item $G$ admits an increasing chain of compact subgroups whose union is dense.
\end{enumerate}

Then every unitary representation $\rho$ of a (discrete) group $\Gamma$ that factors through a unitary representation of $G$ is amenable. 
\end{theorem}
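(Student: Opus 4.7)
The plan is to chain together Theorems \ref{th:union} and \ref{th:gp}, with the factorization hypothesis providing the final bridge. Write the hypothesis as $\rho=\pi\circ j$, where $j\colon\Gamma\to G$ is a group homomorphism and $\pi\colon G\to U(\H)$ is a strongly continuous unitary representation. A state on $\B(\H)$ invariant under conjugation by $\pi(G)$ is automatically invariant under conjugation by $\rho(\Gamma)\subseteq\pi(G)$, so the whole task reduces to producing such a state for $\pi$.

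First I would check that $G$ itself is amenable. Each subgroup $K_n$ is compact, hence amenable; part (i) of Theorem \ref{th:union} then gives amenability of the increasing union $\bigcup_n K_n$, and part (iii) (amenability transferring through density) upgrades this to amenability of $G$. Combined with the SIN assumption, Theorem \ref{th:gp} (Giordano--Pestov) now applies to $\pi$: there exists a state $\phi\in\B(\H)^\ast$ with
\[\phi(\pi_g^\ast T\pi_g)=\phi(T)\]
for all $T\in\B(\H)$ and $g\in G$.

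To finish, I would simply observe that $\rho_\gamma=\pi_{j(\gamma)}$ for every $\gamma\in\Gamma$, so the same state $\phi$ is invariant under conjugation by every element in the image of $\rho$; this is precisely Bekka's amenability condition for $\rho$. There is no substantive obstacle, since the heavy lifting is already packaged inside Theorem \ref{th:gp}. The only point that deserves a moment's care is the reading of ``factors through a unitary representation of $G$'': it must include the existence of an underlying group homomorphism $\Gamma\to G$, so that $\rho(\Gamma)$ genuinely sits inside $\pi(G)$ and invariance of $\phi$ transfers. With that interpretation, the argument is essentially a two-line assembly.
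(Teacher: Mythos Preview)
Your argument is correct and matches the paper's proof essentially line for line: factor $\rho=\pi\circ h$, note that $G$ is amenable (from the chain of compact subgroups) and SIN, invoke Theorem~\ref{th:gp} to make $\pi$ amenable, and then observe that a $G$-invariant state on $\B(\H)$ is automatically $\Gamma$-invariant. One small labeling issue: part (iii) of Theorem~\ref{th:union} as stated passes amenability \emph{from} a group \emph{to} a dense subgroup, not the other way; to go from the dense union $\bigcup_n K_n$ up to $G$ you should cite part (ii) (completion) together with (iii), or simply the standard equivalence that a topological group is amenable iff it has a dense amenable subgroup.
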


\begin{proof}
        Let a homomorphism $h\colon\Gamma\to G$ and a strongly continuous unitary representation $\pi$ of $G$ be such that $\rho=\pi\circ h$. The topological group $G$ is amenable and SIN and so $\pi$ is an amenable representation. Clearly, every $G$-invariant mean on ${\mathcal B}(\H)$ is also $\Gamma$-invariant. 
\end{proof} 

\section{Amenability versus strong property $(T)$ for SIN groups}

The following observation goes back to \cite{BV}.

\begin{lemma}
  Let $\pi$ be an amenable strongly continuous unitary representation of a topological group $G$ in a Hilbert space of dimension $\geq 1$. If $G$ has strong property $(T)$, then $\pi$ contains a finite-dimensional subrepresentation.
  \label{l:fd}
\end{lemma}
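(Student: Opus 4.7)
The plan is to follow the classical Bekka/Valette strategy: amenability of $\pi$ produces almost invariant vectors for the ``adjoint'' representation on Hilbert--Schmidt operators, strong property $(T)$ converts these into a genuine invariant Hilbert--Schmidt operator, and the spectral theorem then extracts a finite dimensional subrepresentation.

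First I would translate the amenability hypothesis into density operator almost invariance. Fix a $G$-invariant state $\phi$ on $\B(\H_\pi)$. Since normal states are weak-$\ast$ dense in the state space of $\B(\H_\pi)$, there is a net of density operators $\rho_\alpha$ (positive trace-class, $\tr\rho_\alpha=1$) with $\tr(\rho_\alpha T)\to \phi(T)$ for every $T\in \B(\H_\pi)$. Invariance of $\phi$ together with strong continuity of the conjugation action of $G$ on the trace class gives, for each $g$, $\pi_g\rho_\alpha\pi_g^\ast-\rho_\alpha\to 0$ in the weak topology $\sigma(\B(\H_\pi)_\ast,\B(\H_\pi))$. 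A Day--Namioka convex combination argument, applied to the finite Kazhdan set $Q=\{g_1,\ldots,g_n\}$ given by strong property $(T)$, upgrades weak convergence to norm convergence: for every $\e>0$ I can find a single density operator $\rho$ with $\norm{\pi_g\rho\pi_g^\ast-\rho}_1<\e$ for all $g\in Q$ simultaneously.

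Next I pass to the Hilbert--Schmidt picture. Set $S=\rho^{1/2}$; then $S\in HS(\H_\pi)\cong \H_\pi\otimes\overline{\H_\pi}$ is a unit vector for the tensor square representation, which acts by $T\mapsto \pi_g T\pi_g^\ast$. The Powers--St\o rmer inequality yields
\[
\norm{\pi_g S\pi_g^\ast-S}_{HS}^2 \leq \norm{\pi_g \rho\pi_g^\ast - \rho}_1 < \e
\]
for every $g\in Q$, so $S$ is a $(Q,\sqrt\e)$-almost invariant unit vector for $\pi\otimes\bar\pi$. Since $Q$ is a finite Kazhdan set, shrinking $\e$ below the square of the Kazhdan constant and invoking strong property $(T)$ for the strongly continuous representation $\pi\otimes\bar\pi$ produces a nonzero invariant Hilbert--Schmidt operator $T$: $\pi_g T=T\pi_g$ for all $g\in G$.

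Finally, $T^\ast T$ is a nonzero positive compact operator that commutes with every $\pi_g$. Each nonzero spectral eigenspace of $T^\ast T$ is finite dimensional by compactness and $G$-invariant by the commutation relation, and the restriction of $\pi$ to any such eigenspace is the sought-for finite dimensional subrepresentation. The main technical obstacle in this plan is the first step: carrying out the Day--Namioka convex combination argument in the trace-norm topology to obtain density operators simultaneously almost invariant on the finite Kazhdan set. Everything afterward (Powers--St\o rmer, strong $(T)$, compact spectral theorem) is routine once the almost invariant Hilbert--Schmidt vectors are in hand.
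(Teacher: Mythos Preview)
Your argument is correct and is essentially the same strategy as the paper's, only with the black boxes unpacked. The paper quotes Bekka's characterisation (Theorem~5.1 in \cite{Bek3}) that $\pi$ is amenable iff $\pi\otimes\bar\pi$, viewed as a representation of the \emph{discrete} group $G$, has almost invariant vectors; it then observes that a finite Kazhdan set is exactly what is needed to pass from ``almost invariant on every finite set'' to an invariant vector in the strongly continuous representation $\pi\otimes\bar\pi$, and finally invokes Lemma~2 of \cite{BV} to convert an invariant Hilbert--Schmidt operator into a finite-dimensional subrepresentation. Your steps 1--4 are precisely the proof of Bekka's characterisation (normal states weak-$\ast$ dense, Day--Namioka on the convex set of density operators mapped affinely into $(\B(\H)_\ast)^{|Q|}$, Powers--St{\o}rmer), and your step 6 is the content of the Bekka--Valette lemma. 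The Day--Namioka step you flag as the main obstacle is entirely standard here: density operators form a convex set and $\rho\mapsto(\pi_{g_i}\rho\pi_{g_i}^\ast-\rho)_{i\leq n}$ is affine, so Mazur's theorem applies directly.
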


\begin{proof}
According to Theorem 5.1 in \cite{Bek3}, a representation $\pi$ of a group $G$ is amenable if and only if the representation $\pi\otimes \bar\pi$ of the group $G$ viewed as discrete contains almost invariant vectors. Since this representation is still strongly continuous with regard to the original topology on $G$, and due to the assumed strong property $(T)$ of $G$, there is an invariant vector for $\pi\otimes \bar\pi$. This is equivalent to saying that $\pi$ contains a finite-dimensional subrepresentation, see e.g. Lemma 2 in \cite{BV}, again applied to $G$ viewed as discrete.
\end{proof}

Since property $(T)$ is expressed in the language of strongly continuous unitary representations, it only makes sense to consider for groups whose topology is determined by such representations, or, equivalently, groups on which continuous positive definite functions separate points and closed subsets. Yet another equivalent description of such groups is that they embed, as topological subgroups, into $U(\H)_s$ for a suitable Hilbert space $\H$.  We will express this property by saying that a topological group $G$ admits {\em sufficiently many unitary representations,} or that $G$ admits a {\em topologically faithful unitary representation}. 

For instance, every locally compact group admits a topologically faithful unitary representation (cf. Proposition 2 in \cite{PT}).
With the exception of the group $\Homeo_+(\I)$ (\ref{homeo}), all topological groups listed in Sect. \ref{s:examples} admit topologically faithful unitary representations.

\begin{theorem}
  Let $G$ be a topological group admitting a topologically faithful unitary representation. Suppose further that $G$ has SIN property. If $G$ is amenable and has strong property $(T)$, then $G$ is precompact.
  \label{th:major}
\end{theorem}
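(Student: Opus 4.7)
The plan is to feed the topologically faithful representation $\pi$ into the machinery of Theorem \ref{th:gp} and Lemma \ref{l:fd}, extract an abundance of finite-dimensional subrepresentations, and then reconstruct a topological embedding of $G$ into a compact group.

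First I would fix a topologically faithful strongly continuous unitary representation $\pi\colon G\to U(\H)_s$. Since $G$ is an amenable SIN group, Theorem \ref{th:gp} applies to give that $\pi$ is amenable in Bekka's sense; the same theorem applies equally well to the restriction of $\pi$ to any closed $G$-invariant subspace of $\H$ (which is again a strongly continuous unitary representation of the same SIN amenable group). Lemma \ref{l:fd} then says that any such nonzero subrepresentation contains a finite-dimensional $G$-invariant subspace.

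Next I would use Zorn's lemma to pick a maximal family $\{\H_\alpha\}_{\alpha\in A}$ of pairwise orthogonal, finite-dimensional, $G$-invariant subspaces of $\H$, and form the closed Hilbert sum $\H_0=\bigoplus_\alpha \H_\alpha$. I claim $\H_0=\H$. If not, the orthogonal complement $\H_0^\perp$ is a nonzero closed $G$-invariant subspace of $\H$, and by the previous paragraph its restriction still contains a finite-dimensional $G$-invariant subspace, contradicting maximality. Therefore $\H=\bigoplus_\alpha \H_\alpha$ decomposes as an orthogonal sum of finite-dimensional invariant subspaces, and accordingly $\pi=\bigoplus_\alpha \pi_\alpha$ with each $\pi_\alpha\colon G\to U(\H_\alpha)$ finite-dimensional.

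Finally I would assemble the map
\[\Pi\colon G\longrightarrow \prod_{\alpha\in A} U(\H_\alpha),\qquad g\longmapsto (\pi_\alpha(g))_{\alpha\in A},\]
with the product of the norm topologies (which on each finite-dimensional $U(\H_\alpha)$ agrees with the strong operator topology). The target is a compact topological group by Tychonoff's theorem. Because $\pi$ is topologically faithful and the strong operator topology on the subgroup of $U(\H)_s$ preserving the decomposition $\H=\bigoplus_\alpha \H_\alpha$ coincides with the restriction of the product topology, $\Pi$ is a topological group embedding; hence $G$ is precompact.

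The one spot I expect to be the real content, rather than a routine verification, is the maximality step: the argument only works because Theorem \ref{th:gp} hands us amenability of every subrepresentation for free in the SIN setting, which is exactly what turns ``contains one finite-dimensional subrepresentation'' (Lemma \ref{l:fd}) into ``splits as a sum of finite-dimensional subrepresentations''. Checking that this splitting transports the topology faithfully into the compact product is a straightforward continuity check.
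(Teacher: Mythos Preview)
Your argument is correct and follows essentially the same route as the paper: both use Theorem \ref{th:gp} to make every subrepresentation amenable, invoke Lemma \ref{l:fd} to force the orthogonal complement of the finite-dimensional part to vanish, and conclude that finite-dimensional representations separate points and closed sets. The only cosmetic differences are that the paper works with the universal representation and defines $\pi_{fin}$ directly (taking $\pi_\infty=\pi_{fin}^\perp$ and showing it is trivial), whereas you fix one topologically faithful representation and build the finite-dimensional decomposition via Zorn's lemma; the final embedding into a compact product is just an explicit unpacking of the paper's concluding sentence.
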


\begin{proof}
        Denote by $\pi$ the universal strongly continuous unitary representation of $G$, that is, the direct sum of all non-trivial pairwise inequivalent strongly continuous unitary representations of $G$ in Hilbert spaces of a density character not exceeding that of $G$. Let $\pi_{fin}$ denote the restriction of $\pi$ to the closure of the direct sum of all finite-dimensional representations of $G$, and denote $\pi_{\infty}=\pi_{fin}^{\perp}$.
  By Theorem \ref{th:gp}, $\pi_{\infty}$ is an amenable representation of $G$ and so, by Lemma \ref{l:fd}, it must be trivial, so $\pi=\pi_{fin}$. We conclude that every strongly continuous unitary representation of $G$ is the sum of finite-dimensional representations, and the family of such representations separates points and closed subsets of $G$. It follows that $G$ is precompact.
\end{proof}

\begin{remark} If $G$ is in addition complete (as is the case with most concrete examples), the conclusion of the theorem becomes ``then $G$ is compact.''
\end{remark}

Since extreme amenability implies both amenability and minimal almost periodicity, one deduces:

\begin{corollary}
        Let $G$ be an extremely amenable topological group with SIN property admitting  a topologically faithful unitary representation. If $G$ is non-trivial, then it does not have strong property $(T)$. \qed
\end{corollary}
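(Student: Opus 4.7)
The plan is to derive a contradiction from assuming that a non-trivial extremely amenable SIN group $G$ admitting a topologically faithful unitary representation has strong property $(T)$. The argument proceeds by combining Theorem \ref{th:major} with the well-known fact (recalled in the extreme amenability subsection of Section 2) that every extremely amenable topological group is minimally almost periodic.

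First I would observe that since extreme amenability trivially implies amenability (a fixed point is in particular an invariant probability measure), the hypotheses of Theorem \ref{th:major} are satisfied: $G$ is SIN, admits a topologically faithful unitary representation, is amenable, and has strong property $(T)$. Applying that theorem, one concludes that $G$ is precompact, i.e. its completion $\hat G$ is a compact topological group into which $G$ embeds as a dense topological subgroup.

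Next I would invoke minimal almost periodicity of $G$. Since $G$ is extremely amenable and non-trivial, it admits no non-trivial continuous finite-dimensional unitary representation (as explained in the excerpt: for any such $\pi\colon G\to U(n)$, the translation action of $G$ on the compact group $U(n)$ would have to have a fixed point, forcing the image to be trivial). On the other hand, by the Peter--Weyl theorem applied to the compact group $\hat G$, continuous finite-dimensional unitary representations of $\hat G$ separate points of $\hat G$. If $\hat G$ were non-trivial, picking any two distinct points and restricting a separating finite-dimensional representation to the dense subgroup $G$ would produce a non-trivial continuous finite-dimensional unitary representation of $G$, contradicting minimal almost periodicity. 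Hence $\hat G$ is trivial, and therefore so is its dense subgroup $G$, contradicting the assumption that $G$ is non-trivial.

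The whole argument is thus a short deduction from Theorem \ref{th:major} plus the Peter--Weyl theorem; there is no real obstacle, because the two ingredients (precompactness from strong $(T)$ plus amenability plus SIN, and minimal almost periodicity from extreme amenability) match up exactly. The only mild subtlety worth flagging is that the representation witnessing topological faithfulness need not itself be finite-dimensional, so one must really invoke Peter--Weyl on $\hat G$ rather than trying to use the given faithful representation directly.
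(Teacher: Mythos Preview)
Your proof is correct and follows exactly the approach the paper intends: the corollary is stated immediately after the remark ``Since extreme amenability implies both amenability and minimal almost periodicity, one deduces,'' and your argument spells out precisely this deduction from Theorem~\ref{th:major} together with Peter--Weyl on the compact completion. The only difference is that you supply the details (invoking Peter--Weyl explicitly and passing through $\hat G$) that the paper leaves implicit behind the \qed.
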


As an immediate application of this Corollary, we obtain the following.

\begin{example}
  The following Polish groups do not have strong property $(T)$:
  \begin{itemize}
    \item The unitary group $U(R)_2$ of the hyperfinite factor of type $II_1$ equipped with the Hilbert-Schmidt metric (\ref{ur2}).
    \item The full group $[{\mathscr R}]$ of a hyperfinite measure-preserving equivalence relation equipped with the uniform topology (\ref{full}). 
    \item The Fredholm group $U_C(\H)$ of compact perturbations of identity (\ref{uch}). 
  \item The group $L^0(X,\mu;G)$ of measurable maps from the standard Lebesgue measure space to the amenable locally compact group $G$, equipped with the topology of convergence in measure (\ref{loxg}).
  \end{itemize}
\end{example}

In the next two Sections we will show that the two latter groups do not even have property $(T)$. We do not know if it also applies to the two former groups.

\section{Amenability versus property $(T)$ for groups with uniform operator topology}

\begin{lemma}
        Let $\pi_1$ and $\pi_2$ be two uniformly continuous unitary representations of a topological group $G$. The tensor product $\pi_1\otimes\pi_2$ is again uniformly continuous. 
\end{lemma}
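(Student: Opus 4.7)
The plan is to reduce the statement to the standard norm estimate for tensor products of unitary operators. Recall that a unitary representation $\pi\colon G\to U(\H)$ is uniformly continuous precisely when, for every $\varepsilon>0$, there exists a neighbourhood $V$ of the identity in $G$ such that $\norm{\pi_g-\mathbb{I}}<\varepsilon$ for all $g\in V$, where the norm is the operator norm on $\B(\H)$. So the task is to control $\norm{(\pi_1)_g\otimes(\pi_2)_g - \mathbb{I}\otimes\mathbb{I}}$ in terms of the individual defects $\norm{(\pi_i)_g-\mathbb{I}}$.

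First I would perform the standard telescoping decomposition
\[
(\pi_1)_g\otimes(\pi_2)_g - \mathbb{I}\otimes\mathbb{I} \;=\; \bigl((\pi_1)_g-\mathbb{I}\bigr)\otimes (\pi_2)_g \;+\; \mathbb{I}\otimes\bigl((\pi_2)_g-\mathbb{I}\bigr),
\]
taking operator norms of both sides on $\H_1\otimes\H_2$ and using the cross-norm property $\norm{A\otimes B}=\norm{A}\cdot\norm{B}$ for the Hilbert space tensor product, together with the fact that $\norm{(\pi_2)_g}=1=\norm{\mathbb{I}}$ since $(\pi_2)_g$ is unitary. This yields
\[
\norm{(\pi_1\otimes\pi_2)_g-\mathbb{I}\otimes\mathbb{I}} \;\leq\; \norm{(\pi_1)_g-\mathbb{I}} + \norm{(\pi_2)_g-\mathbb{I}}.
\]

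Given $\varepsilon>0$, uniform continuity of each $\pi_i$ provides a neighbourhood $V_i$ of identity with $\norm{(\pi_i)_g-\mathbb{I}}<\varepsilon/2$ on $V_i$, and then $V_1\cap V_2$ witnesses uniform continuity of $\pi_1\otimes\pi_2$ at the identity; invariance under translation (the maps $g\mapsto (\pi_i)_g$ are homomorphisms into the uniformly topologized unitary group) upgrades this to continuity everywhere. There is no real obstacle here — the only point that needs to be made carefully is the appeal to the cross-norm identity, which is automatic for the spatial/Hilbert tensor product but would fail for non-Hilbert tensor norms; that is the single detail on which the argument relies.
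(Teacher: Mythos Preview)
Your argument is correct and is essentially the same as the paper's: both rely on the telescoping identity $a\otimes b-c\otimes d=(a-c)\otimes b+c\otimes(b-d)$ together with the cross-norm property of the spatial tensor product. The only cosmetic difference is that the paper phrases the estimate as uniform continuity of the map $x\mapsto x\otimes x$ on norm-bounded subsets of a $C^\ast$-algebra (so nominally the diagonal case, from which the two-representation case follows by passing to $\pi_1\oplus\pi_2$), whereas you work directly with two representations and specialize $y$ to the identity; the underlying inequality is identical.
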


\begin{proof}
        It is enough to observe that, given a $C^\ast$-algebra $A$,
        the mapping $x\mapsto x\otimes x$ from $A$ to the spatial tensor product $A\otimes A$ is uniformly continuous on the unitary group $U(A)$, indeed on every norm bounded subset of $A$:
        \[\norm{x\otimes x-y\otimes y} \leq \norm{x\otimes x-x\otimes y} + \norm{x\otimes y -y\otimes y} \leq (\norm{x}+\norm{y})\norm{x-y}.\]
\end{proof}

\begin{lemma}
        Let $G$ be an amenable topological group with property $(T)$. Then every non-trivial uniformly continuous representation $\pi$ of $G$ contains a non-trivial finite-dimensional subrepresentation.
        \label{l:fdr}
\end{lemma}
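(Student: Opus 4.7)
The plan is to follow the template of Lemma \ref{l:fd}, with two adjustments forced by the weaker hypotheses here: amenability of $\pi$ must be derived rather than assumed, and almost invariant vectors with respect to $G$ viewed as discrete must be upgraded to almost invariant vectors in the topological sense. Uniform continuity of $\pi$ will drive both moves.

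First I would show that $\pi$ is an amenable representation of $G$. Set $H=\overline{\pi(G)}\subseteq U(\H)_u$. As a continuous homomorphic image of an amenable topological group, $\pi(G)$ is amenable, hence so is its closure $H$ by Theorem \ref{th:union}(iii). Since $H$ is a topological subgroup of the SIN group $U(\H)_u$, it is itself SIN. Theorem \ref{th:gp} applied to the tautological representation of $H$ then yields an $H$-invariant state on $\B(\H)$, which is a fortiori $G$-invariant via $\pi$, exhibiting $\pi$ as amenable. By Theorem 5.1 of \cite{Bek3}, this amenability is equivalent to $\pi\otimes\bar\pi$ having almost invariant vectors with respect to the discrete topology on $G$.

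The decisive step is to upgrade these to topological almost invariant vectors. By the preceding lemma $\pi\otimes\bar\pi$ is uniformly continuous. Given a compact $K\subseteq G$ and $\e>0$, choose a neighbourhood $V$ of identity with $\|(\pi\otimes\bar\pi)_v-\I\|<\e/2$ for all $v\in V$, and cover $K$ by finitely many left translates $g_1V,\ldots,g_nV$. A unit vector that is $(\{g_1,\ldots,g_n\},\e/2)$-invariant, supplied by the discrete almost invariance, then becomes $(K,\e)$-invariant via the triangle inequality. Since $\pi\otimes\bar\pi$ is strongly continuous, property $(T)$ of $G$ now yields an invariant vector in $\pi\otimes\bar\pi$, which by Lemma 2 of \cite{BV} is equivalent to $\pi$ containing a non-zero finite-dimensional subrepresentation.

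The main obstacle I anticipate is precisely this passage from discrete to topological almost invariance; it is here that uniform continuity of $\pi$ enters crucially, replacing the role played by strong property $(T)$ and the SIN hypothesis on $G$ in Lemma \ref{l:fd}.
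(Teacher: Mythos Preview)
Your argument is correct and follows the same blueprint as the paper's proof: obtain amenability of $\pi$ from amenability of $G$ via a SIN intermediary and Theorem~\ref{th:gp}, invoke Bekka's characterisation to get almost invariant vectors in $\pi\otimes\bar\pi$ for $G$ viewed as discrete, exploit uniform continuity of $\pi\otimes\bar\pi$ to upgrade finite almost invariance to compact almost invariance, and conclude via property $(T)$ and Lemma~2 of \cite{BV}. The only organisational difference is the choice of SIN intermediary: the paper factors through the quotient $G/N$ by the kernel of the bi-invariant pseudometric coming from $\pi\otimes\bar\pi$ and works with a Kazhdan pair of $G/N$, whereas you pass to the closure $H=\overline{\pi(G)}$ inside $U(\H)_u$ and run the upgrade directly in $G$; your route is slightly more streamlined and avoids checking that $\pi$ itself (as opposed to $\pi\otimes\bar\pi$) factors through $G/N$.

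Two small repairs. First, Theorem~\ref{th:union}(iii) goes the wrong way (it says a dense subgroup of an amenable group is amenable); to pass from $\pi(G)$ to its closure $H$ you want part (ii), or simply observe that any continuous $H$-action on a compact space restricts to a continuous $\pi(G)$-action, whose invariant measure is then $H$-invariant by density. Second, as written you only produce a finite-dimensional subrepresentation, not a non-trivial one; you should begin, as the paper does, by replacing $\H$ with the orthogonal complement of the $G$-fixed vectors, so that the finite-dimensional invariant subspace you find at the end carries no fixed vector and is therefore non-trivial.
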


\begin{proof}
        By passing to the orthogonal complement to the module of all $G$-fixed vectors, we can assume that $\pi$ contains no fixed vectors. 
        The representation $\pi\otimes\bar\pi$ is uniformly continuous. It is equivalent to the representation of $G$ by conjugations on the space ${\mathcal C}_2(\H)$ of Hilbert-Schmidt operators of $\H$, and the corresponding uniform operator distance lifts to a bi-invariant continuous pseudometric, $d$, on $G$. The set $N=\{x\in G\colon d(e,x)=0\}$ is a closed normal subgroup, and $d$ factors through the factor-homomorphism to a bi-invariant continuous metric $\bar d$ on $G/N$. 
        The metric group $(G/N,\bar d)$ is amenable and has the property $(T)$, being a continuous homomorphic image of $G$. 
        Similarly, $\pi$ factors to a unitary representation, $\rho$, of $G/N$, which is amenable by Theorem \ref{th:gp}.  
        
        Let $(Q,\e)$ be a Kazhdan pair for $G/N$, where $Q$ is compact. 
        Select a finite $\e/4$-net, $F$, for the compact set $Q$ with regard to the metric $\bar d$. By Theorem 5.1 in \cite{Bek3}, $\rho\otimes\bar\rho$ possesses a $(F,\e/2)$-almost invariant vector, $\xi$, of norm one. Such a vector is $(Q,\e)$-almost invariant: if $g\in Q$, there is $g_1\in F$ with $\bar d(g,g_1)<\e/4$, and so
        \[\norm{\xi-g\xi}\leq\norm{\xi-g_1\xi}+\norm{g_1\xi-g\xi} <\e/2 + 2\bar d(g_1,g)=\e.\]
        Therefore, there is an invariant vector for $\rho\otimes\bar\rho$ and consequently for $\pi\otimes\bar\pi$. This is equivalent to the existence of an invariant finite-dimensional subspace of $\H$. Since $\pi$ has no invariant vectors, we conclude.
\end{proof}

\begin{theorem}
        Let $G$ be a topological group admitting a faithful uniformly continuous unitary representation. Suppose $G$ is amenable and has property $(T)$. Then $G$ is maximally almost periodic.
\end{theorem}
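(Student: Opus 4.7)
The plan is to imitate the proof of Theorem \ref{th:major} almost line by line, substituting the given faithful uniformly continuous unitary representation $\pi \colon G \to U(\H)_u$ for the universal strongly continuous representation that appeared there, and substituting Lemma \ref{l:fdr} (the uniformly continuous analogue) for Lemma \ref{l:fd}. The weaker conclusion, maximal almost periodicity rather than precompactness, reflects the fact that ``faithful'' in the hypothesis need only be read as injectivity, whereas in Theorem \ref{th:major} the universal representation is automatically topologically faithful.

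First I would split $\H$ canonically as the orthogonal direct sum $\H = \H_{fin} \oplus \H_{\infty}$, where $\H_{fin}$ is the closed linear span of all finite-dimensional $G$-invariant subspaces of $\H$ and $\H_\infty = \H_{fin}^{\perp}$. Both summands are $G$-invariant (being orthogonal complements of invariant subspaces under a unitary representation), so $\pi$ restricts to unitary subrepresentations $\pi_{fin}$ and $\pi_\infty$, each of which inherits uniform continuity from $\pi$. By construction, $\pi_\infty$ admits no non-zero finite-dimensional subrepresentation. But $G$ is amenable and has property $(T)$, so Lemma \ref{l:fdr} applies: any non-zero uniformly continuous unitary representation of $G$ would contain a non-zero finite-dimensional subrepresentation. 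Therefore $\pi_\infty = 0$ and $\H = \H_{fin}$.

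Next I would decompose $\H_{fin}$ as an orthogonal direct sum $\bigoplus_{i \in I} V_i$ of finite-dimensional $G$-invariant subspaces by a standard Zorn's-lemma argument (take a maximal orthogonal family of irreducible finite-dimensional subrepresentations; its span is dense in $\H_{fin}$ by maximality). Each $\sigma_i := \pi|_{V_i}$ is a continuous finite-dimensional unitary representation of $G$. To conclude maximal almost periodicity, pick any $g \in G$ with $g \neq e$. Faithfulness of $\pi$ gives $\pi(g) \neq \Id_{\H}$, so there is some $\xi \in \H$ with $\pi(g)\xi \neq \xi$; decomposing $\xi = \sum \xi_i$ along the $V_i$, we see that $\sigma_i(g)\xi_i \neq \xi_i$ for at least one $i$, which forces $\sigma_i(g) \neq \Id_{V_i}$. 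Thus the family $\{\sigma_i\}$ of continuous finite-dimensional unitary representations separates points of $G$.

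I do not anticipate a genuine obstacle here: the heavy lifting is done by Lemma \ref{l:fdr}, whose proof in turn relies on the SIN property of the uniform topology on a subgroup of $U(\H)_u$ (so that Theorem \ref{th:gp} applies to $\pi\otimes\bar\pi$) together with the tensor-product trick \`a la Bekka--Valette. The only points that require minor care are the bookkeeping that $\pi_\infty$ really is uniformly continuous as a subrepresentation (immediate, since restriction to an invariant subspace cannot increase operator-norm distortion) and the observation that ``faithful'' suffices to yield separation of points rather than the stronger separation of points from closed sets needed in the precompactness theorem.
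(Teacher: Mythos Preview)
Your proposal is correct and follows essentially the same route as the paper: decompose the given faithful uniformly continuous representation as $\pi=\pi_{fin}\oplus\pi_{fin}^{\perp}$, invoke Lemma~\ref{l:fdr} to kill $\pi_{fin}^{\perp}$, and read off that finite-dimensional subrepresentations separate points. The paper's version is terser (it omits the Zorn decomposition and the explicit point-separation check you spell out), but the argument is the same.
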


\begin{proof}
        Let $\pi\colon G\to U(\H)_u$ be an injective uniformly continuous unitary representation. 
        Denote $\pi_{fin}$ the direct sum of all finite-dimensional subrepresentations of $G$. Lemma \ref{l:fdr} implies that the unitary representation $\pi_{fin}^\perp$ is trivial, and so continuous finite-dimensional unitary representations of $G$ separate points.
\end{proof}

\begin{corollary}
        No minimally almost periodic topological group admitting a faithful uniformly continuous representation is at the same time amenable and Kazhdan. \qed
\end{corollary}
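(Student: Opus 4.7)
The plan is to derive the corollary directly from the preceding theorem, using only the fact that minimal and maximal almost periodicity are mutually exclusive for non-trivial groups. I would argue by contrapositive. Suppose, for contradiction, that $G$ is a non-trivial topological group that is minimally almost periodic, admits a faithful uniformly continuous unitary representation, is amenable, and has property $(T)$. All four hypotheses of the preceding theorem are then satisfied, so I can conclude that $G$ is maximally almost periodic.

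The second step is the elementary observation that a non-trivial topological group cannot be simultaneously minimally and maximally almost periodic. Minimal almost periodicity forces every continuous finite-dimensional unitary representation of $G$ to be trivial, whereas maximal almost periodicity asserts that the family of such representations separates the points of $G$. A family consisting solely of trivial homomorphisms can separate the points of $G$ only if $G$ has at most one element, which contradicts the assumed non-triviality and completes the argument.

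There is no genuine obstacle here: the substantive content has been invested entirely in the preceding theorem, and the corollary is little more than its contrapositive reformulation. The only minor subtlety worth flagging is the implicit convention that ``minimally almost periodic'' refers to non-trivial groups, as the one-element group would otherwise be a formal counterexample to the stated assertion.
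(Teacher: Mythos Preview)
Your argument is correct and matches the paper's approach exactly: the corollary is marked with a \qed\ symbol and no proof, indicating it is an immediate consequence of the preceding theorem, which is precisely what you have spelled out. Your remark about the trivial group is a fair caveat, though the paper implicitly excludes this degenerate case.
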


\begin{corollary}
        An extremely amenable topological group admitting a faithful uniformly continuous representation is not Kazhdan. \qed
\end{corollary}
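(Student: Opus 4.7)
The plan is to derive this corollary as an essentially immediate consequence of the preceding corollary, by observing that extreme amenability subsumes both of its hypotheses. First, I would note that extreme amenability trivially implies amenability, since a fixed point for a continuous action on a compact space $X$ yields a $G$-invariant Borel probability measure, namely the Dirac measure at that point. Second, as already recorded in the paper's subsection on extreme amenability, every extremely amenable topological group is automatically minimally almost periodic: a non-trivial continuous finite-dimensional unitary representation $\pi\colon G\to U(n)$ would give rise, via composition with left translation on $U(n)$, to a continuous action of $G$ on the compact space $U(n)$ with no fixed point, contradicting extreme amenability.

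With these two observations in hand, the hypotheses of the preceding corollary are met for any extremely amenable topological group $G$ admitting a faithful uniformly continuous unitary representation: such a $G$ is at once amenable, minimally almost periodic, and admits a faithful uniformly continuous representation. Invoking the preceding corollary yields directly that $G$ cannot have property $(T)$, i.e.\ is not Kazhdan.

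There is no genuine obstacle; the statement is a formal specialisation of the preceding corollary, rephrased to highlight the most striking class of examples (such as $U_C(\H)$, $U(R)_2$, $U(\infty)_2$, and the Fredholm-type unitary groups considered in Section~\ref{s:examples}) to which it applies. The only point meriting a word of care is the verification that extreme amenability indeed entails both amenability and minimal almost periodicity simultaneously, and both implications are standard and already quoted in the text.
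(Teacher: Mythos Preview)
Your argument is correct and is exactly the intended one: the paper marks this corollary with \qed and no proof, since extreme amenability implies both amenability and minimal almost periodicity (as recorded in Subsection~2.6), so the preceding corollary applies verbatim. One small caveat on your illustrative list: $U(R)_2$ does not belong here, as its topology is not the uniform operator topology and the paper explicitly leaves open whether it has property $(T)$; the legitimate applications in this section are $U_C(\H)$ and $U(\infty)_2$.
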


\begin{example}
        The Fredholm group $U_C(\H)$ of compact perturbations of identity is extremely amenable, and is a topological subgroup of $U(\ell_2)$ with the uniform operator topology. Consequently, it does not have property $(T)$. 
        
        At the same time, as shown by Atkin \cite{atkin,atkin_hokkaido}, the group $U_C(\H)$ is bounded (cf. Subs. \ref{uch}), and so has property $(OB)$ and property $(FH)$. To our knowledge, this is the first known example of a Polish group with property $(FH)$ which is not a Kazhdan group. Another example, of a different nature, will appear in the following section.
\end{example}

\begin{example}
        The group $U(\infty)_2$ of unitary operators of the form $\I+T$, where $T$ is of Schatten class $2$, equipped with the Hilbert-Schmidt metric (Subs. \ref{uinfty2}), does not have property $(T)$ since it admits a continuous homomorphism to $U_C(\H)$ with a dense image. 
\end{example}

\section{$L^0(X,\mu;\T)$ does not have property $(T)$}

The construction in this section was inspired by the work of Solecki \cite{solecki}.

We will equip the groups of measurable maps with the $L^1$-metric, which is bi-invariant and induces the topology of convergence in measure.

Let $A$ be a subset of a standard Lebesgue measure space $(X,\mu)$ having $\mu$-measure half, and let $i\colon A\to X\setminus A$ be a measure preserving map. The map $i$ determines a continuous group homomorphism
$h=h_i\colon L^0(X,\mu;\T)\to L^0(A,\mu\vert_A;\T)$,
given by
\[h_i(f) = f\vert_A\cdot (f\vert_{X\setminus A}\circ i)^{-1}.\]
The kernel of this homomorphism consists of all ``$i$-periodic'' functions $f$ having the property $f\vert_A = f\vert_{X\setminus A}\circ i$.
The homomorphism $h$ is $1$-Lipschitz with regard to the metrics $L^1(\mu)$ and $L^1(\mu\vert_A)$. It is enough to verify the property near the identity function, $1$, because the distances are translation-invariant. We have
\begin{eqnarray*}
        \norm{h(f)-1}_{L^1(\mu\vert A)} &=& \int_A\left\bracevert (f\vert_A)\cdot (f\vert_{X\setminus A}\circ i)^{-1} - 1\right\bracevert\,d\mu\vert_A(x) \\
 &\leq&
 \int_A\left\bracevert 1 - f\vert_A + f\vert_A  - f\vert_A\cdot (f\vert_{X\setminus A}\circ i)^{-1} \right\bracevert\,d\mu\vert_A(x) \\
&\leq& \int_A\left\bracevert 1 - f\vert_A\right\bracevert \,d\mu\vert_A(x)+
\int_{X\setminus A}\left\bracevert 1 - f\vert_{X\setminus A}^{-1}\right\bracevert \,d\mu\vert_{X\setminus A}(x)\\
&=& \int_X\left\bracevert 1 - f(x)\right\bracevert\,d\mu(x) \\
&=&\norm {f-1}_{L^1(\mu)}.
\end{eqnarray*}

Denote by $\rho=\rho_A$ the unitary representation of $L^0(A,\mu\vert_A;\T)$ in $L^2(A,\mu\vert_A)$ by multiplication operators:
\begin{equation}
\label{eq:mult}
L^0(A,\mu\vert_A;\T)\ni f\mapsto [\rho_A(f)\colon g\mapsto fg]\in U(L^2(A,\mu\vert_A)).\end{equation}
(Here we interpret $\T=\{z\in\C\colon \abs z=1\}$.)
Define a unitary representation $\pi=\pi_i$ of the group $L^0(X,\mu;\T)$ in $L^2(A,\mu\vert_A)$ by
\[\pi_i = \rho_A\circ h_i.\]
Notice that for all $f\in L^0(X,\mu;\T)$,
\begin{equation}
\label{eq:prop}
\norm{\pi(f)(1_A)-1_A}_{L^2(\mu\vert_A)} \leq 
\norm{\pi(f)(1_A)-1_A}_{L^1(\mu\vert_A)}=\norm{h(f)-1_A}_{L^1(\mu\vert_A)} \leq \norm{f-1}_{L^1(\mu)}.\end{equation}
Here $1_A$ is the indicator function of $A$, a vector of norm $\sqrt 2/2$ in $L^2(A,\mu\vert_A)$.

From now on, we will identify $(X,\mu)$ with the unit interval $[0,1]$ equipped with the Lebesgue measure. Define
\[A_n =\bigcup_{i=0}^{n-1}\left[\frac {i} {n}, \frac{2i+1}{2n}\right)\]
and set
\[i_n(x) =  x+\frac 1{2n},
\]
a measure space isomorphism of $A_n$ with its complement.
Let $\pi$ be a unitary representation of $L^1(X,\mu;\T)$ in the space $\H=\oplus_{n\in\N_+}L^2(A_n,\mu\vert_{A_n})$,
which is the orthogonal sum of the countably many representations $\pi_{i_n}$, $n=1,2,3,\ldots$.
Notice that $\pi$ does not have an invariant unit vector, because none of the subrepresentations $\pi_{i_n}$ do.
We will show that $\pi$ has almost invariant vectors.

Let $K$ be a compact subset of $L^0(X,\mu;\T)$, and let $\e>0$. Approximate $K$ to within $\sqrt 2 \e/2$ with regard to the $L^1(\mu)$-distance with a finite set $F$ of simple functions taking constant values on each of the intervals
\[\left[\frac i n, \frac{i+1}n\right),~~i=0,1,\ldots,n-1,\]
where $n$ is sufficiently large. 

The functions in $F$ are invariant under the transformation $i_n$, where $n$ is chosen as above. Therefore,
the representation $\pi=\pi_{i_n}$ sends the set $F$ to the identity operator. 
For every function $f\in K$ at a $L^1(\mu)$-distance $<\sqrt 2\e/2$ from some $g\in F$, one has 
\[\norm{\pi(f)(1_{A_n})-1_{A_n}}_{\H}
=\norm{\pi(f)(1_{A_n})-1_{A_n}}_{L^2(\mu\vert_{A_n})}\leq \norm{f-g}_{L^1(\mu)}<\sqrt 2\e/2,\]
and since $\norm{1_{A_n}}_{\H}=\sqrt 2/2$, the vector $1_{A_n}$ is $(K,\e)$-almost invariant. \qed

\begin{remark}
        It is quite easy to see that the group $L^0(X,\mu;\T)$ has property $(OB)$, because it equals a finite power of every neighbourhood of zero. Thus, it provides the second example of a Polish group with property $(OB)$ and without property $(T)$. We do not know if the above construction can be extended to every compact group $K$ in place of $\T$.
\end{remark}

\section{Minimally almost periodic topological group with property $(T)$ without a finite Kazhdan set}

\subsection{Infinite products of Kazhdan sets in topological groups}

\begin{lemma}
        Let $I$ be an index set, and for each $i\in I$, let $Q_i$ be a Kazhdan set in a topological (possibly discrete) group $\Gamma_i$, with the same Kazhdan constant $\e>0$. Let $\Gamma$ be a subgroup of 
        the direct product $\prod_{i\in I}\Gamma_i$ containing both the direct sum $\oplus_{i\in I}\Gamma_i$ and the set $Q=\prod_{i\in I} Q_i$. Suppose $\Gamma$ is equipped with a group topology in which the direct sum  $\oplus_{i\in I}\Gamma_i$ is dense. 
Then $Q=\prod_{i\in I} Q_i$ is a Kazhdan set for the topological group $\Gamma$, with regard to any Kazhdan constant $\delta$ such that $0<\delta<\e$. 
\label{l:infq}
\end{lemma}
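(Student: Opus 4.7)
Let $\pi$ be a strongly continuous unitary representation of $\Gamma$ on a Hilbert space $\H$ and let $\xi$ be a unit vector satisfying $\|\pi(q)\xi-\xi\|<\delta$ for all $q\in Q=\prod_{i\in I}Q_i$; the plan is to produce a nonzero $\Gamma$-invariant vector. By density of $\oplus_{i\in I}\Gamma_i$ in $\Gamma$ and strong continuity of $\pi$, this is equivalent to producing a vector fixed by every $\Gamma_i$.

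For each $i\in I$, let $P_i$ be the orthogonal projection onto $\H^{\Gamma_i}$. Since $\Gamma_i$ and $\Gamma_j$ commute in $\Gamma$ for $i\neq j$, $P_i$ commutes with $\pi(\Gamma_j)$ for every $j$ and hence, by density, with all of $\pi(\Gamma)$; in particular the $P_i$'s pairwise commute, and the orthogonal projection onto $\H^{\Gamma}$ equals $P=\bigwedge_i P_i$, the strong-operator limit of $P_F=\prod_{i\in F} P_i$ over finite subsets $F\subseteq I$. The aim is to show $\|\xi-P\xi\|\leq\delta/\e<1$, so that $P\xi\neq 0$.

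The main input is the Cartesian structure of $Q$. For any fixed base element $q^*\in Q$ and any $i\in I$, $q_i\in Q_i$, the element $\tilde q\in Q$ obtained from $q^*$ by replacing its $i$-th coordinate with $q_i$ again lies in $Q$, and by unitarity of $\pi(q^*)$ together with the triangle inequality
\[
\|\pi(h_i)\xi-\xi\|<2\delta,\qquad h_i\in\Gamma_i\text{ with coordinate }(q^*_i)^{-1}q_i.
\]
Combined with a translation of $\xi$ by $\pi(q^*_i)$ and the Kazhdan projection estimate for $(Q_i,\e)$ applied to the $\Gamma_i$-representation on $(1-P_i)\H$ (which has no $\Gamma_i$-invariants), this yields the per-factor bound $\|(1-P_i)\xi\|\leq\delta/\e$.

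To combine the per-factor estimates into a uniform bound $\|\xi-P_F\xi\|\leq\delta/\e$ for every finite $F$, I would proceed by induction on $|F|$: at the step $n\to n+1$, restrict to $\H^{\Gamma_{n+1}}$ to apply the inductive hypothesis, and use the Kazhdan estimate on $(1-P_{n+1})\H$ for the remainder. The main obstacle is avoiding an accumulating loss: a naive Pythagoras combination introduces a factor of $\sqrt{2}$ at each inductive step and makes the bound vacuous in the limit $F\uparrow I$. The correct strategy is to treat $\prod_{i\in F}Q_i$ as a single Cartesian Kazhdan set throughout and apply the projection lemma directly to the $\oplus_{i\in F}\Gamma_i$-action, exploiting commutativity of the $P_i$'s and the symmetry of the Cartesian structure so that the Kazhdan constant of $\prod_{i\in F}Q_i$ for $\oplus_{i\in F}\Gamma_i$ remains at least $\e$ and the desired bound passes cleanly to the limit, yielding $\|\xi-P\xi\|\leq\delta/\e<1$.
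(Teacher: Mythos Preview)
Your proposal correctly identifies the target inequality $\norm{\xi-P\xi}\leq\delta/\e$ and correctly diagnoses the obstacle: a naive inductive combination of per-factor bounds accumulates constants and becomes vacuous. But your final paragraph does not overcome this obstacle --- it merely restates it. You assert that ``the Kazhdan constant of $\prod_{i\in F}Q_i$ for $\oplus_{i\in F}\Gamma_i$ remains at least $\e$,'' which is exactly the finite instance of the lemma you are proving, and you give no argument for it. Appealing to ``the symmetry of the Cartesian structure'' and ``the projection lemma'' is not a proof; that the Cartesian product preserves the Kazhdan constant is precisely the nontrivial content here. (Your per-factor bound is also slightly off: the $q^\ast$-translation trick yields $2\delta$, not $\delta$, and the passage from $\norm{\pi(h_i)\xi-\xi}<2\delta$ with $h_i=(q^\ast_i)^{-1}q_i$ to control over all of $Q_i$ is not justified. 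But since the per-factor bound is in any case too weak to combine, this is a secondary issue.)

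The key idea you are missing --- and what the paper does --- is to abandon the per-factor estimates altogether and instead produce a \emph{single} element $g\in Q$ that witnesses the displacement all at once. Well-order $I$ as an ordinal $\tau$ and set $\check\H_\beta=\H_{[0,\beta)}\ominus\H_{[0,\beta]}$, so that $\H=\big(\bigoplus_{\beta<\tau}\check\H_\beta\big)\oplus\H^{\Gamma}$ is an orthogonal decomposition into $\Gamma$-submodules, and on each $\check\H_\beta$ the factor $\Gamma_\beta$ acts with no invariant vectors. Decompose $\xi=\sum_\beta\xi_\beta+\xi_\tau$ accordingly, and for each $\beta$ pick $g_\beta\in Q_\beta$ moving $\xi_\beta$ by at least $\e\norm{\xi_\beta}$. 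With $g=(g_\beta)\in Q$, orthogonality of the pieces gives a single clean Pythagorean bound
\[
\delta>\norm{g\xi-\xi}\geq\e\sqrt{\sum_{\beta<\tau}\norm{\xi_\beta}^2}=\e\,\norm{(1-P)\xi},
\]
with no loss in the constant. The point is that the telescoping decomposition arranges, for each $\beta$, that earlier coordinates act trivially on $\check\H_\beta$, so one coordinate of $g$ suffices to move each $\xi_\beta$; this is what lets a single $g\in Q$ replace your attempted induction over finite $F$.
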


\begin{proof}  
        Let $\H$ be a unitary $\Gamma$-module, that is, a Hilbert space equipped with a strongly continuous unitary representation of $\Gamma$. For each subset $J\subseteq I$, denote $\H_J$ the Hilbert subspace consisting of all vectors invariant under each group $\Gamma_j$, $j\in J$. For all $i\in I\setminus J$, $j\in J$, $g\in\Gamma_i$, $h\in\Gamma_j$, and $\xi\in\H_J$, one has $h(g\xi)=gh\xi=g\xi$, and so $\H_J$ is a unitary $\oplus_{i\in I}\Gamma_i$-submodule, hence a $\Gamma$-submodule. 
        
        Identify $I$ with the smallest ordinal $\tau$ of cardinality $\abs{I}$. In particular, $\H_{\tau}=\H_I$ is the module of all $\Gamma$-invariant vectors. For each $0\leq\beta<\tau$ denote $\check\H_\beta$ the orthogonal complement of the unitary module $\H_{[0,\beta]}$ in $\H_{[0,\beta)}$. (Notice that $\H_{\emptyset}=\H$.) The orthogonal projections $\check\pi_{\beta}$ of $\H$ into $\check\H_\beta$, $\beta\leq\tau$, are $\Gamma$-equivariant, as is the orthogonal projection $\pi_{\tau}$ of $\H$ onto $\H_{\tau}$. The unitary module $\H$ decomposes into a direct sum of submodules $\check\H_\beta$, $\beta<\tau$, and $\H_{\tau}$.
        
        Now let $\xi$ be a $(Q,\delta)$-almost invariant vector in $\H$ of unit norm, for some $\delta<\e$. Set $\xi_{\beta}=\check\pi_{\beta}(\xi)$, $\xi_{\tau}=\pi_{\tau}(\xi)$, and $\delta_\beta=\norm{\xi_\beta}$. One has $\delta_{\beta}>0$ 
        for at most a countable set of ordinals $\beta$, and $\xi=\sum_{\beta\leq\tau}\xi_{\beta}$. It remains to show that $\norm{\xi_{\tau}}>0$, that is, $\norm{\sum_{\beta<\tau}\xi_{\beta}}<1$.
        
        Let $\beta<\tau$. If $\xi_\beta=0$, set $g_\beta=e\in\Gamma_\beta$. Else, 
        the action of $\Gamma_{\beta}$ on $\check\H_\beta$ has no invariant vectors, and by assumption there is $g_\beta\in Q_\beta$ satisfying $\norm{\xi_\beta-g_\beta\xi_\beta}\geq\delta_\beta\e$. Set $g=(g_\beta)_{\beta<\tau}\in Q$. Since the vectors $\xi_\beta-g_\beta\xi_\beta$ are pairwise orthogonal, one has
        \[\delta>\norm{g\xi-\xi} = \norm{g\sum_\beta\xi_\beta-\sum_\beta\xi_\beta}\geq \e\sqrt{\sum_\beta \delta_\beta^2}.
        \]
        This implies:
        \[\norm{\sum_\beta\xi_\beta}=\sqrt{\sum_\beta \delta_\beta^2}<\delta/\e<1,\]
        as required.
\end{proof}

\begin{remark}
        In a non-discrete topological group, a subgroup generated by a Kazhdan subset need not be dense, hence the assumptions of the lemma.
        Also, as is seen from the proof, if $I$ is uncountable, then already a subset of $Q$ consisting of all elements with countable supports is a Kazhdan set.
\end{remark}

\subsection{Groups of bounded maps}
For a group $\Gamma$ and an index set $I$, denote $b(I,\Gamma)$ the set of all functions from $I$ to $\Gamma$ each having a finite range. This is a subgroup of $\Gamma^I$, containing the direct sum of $I$ copies of $\Gamma$. Informally, this is a ``little $\ell^\infty$ with values in $\Gamma$.''

We will define a group topology on $b(I,\Gamma)$ as follows. For every finite subset $Q\subseteq\Gamma$, equip the product $Q^I$ (which is contained in our group) with the compact product topology. The group $b(I,\Gamma)$ is covered by a countable directed family of compact sets $Q^I$. Define a chain topology with regard to this cover: a subset $F$ of the group is closed if and only if every intersection $F\cap Q^I$ is closed in $Q^I$ for each finite $Q\subseteq\Gamma$.

To verify that the multiplication map $m$ is continuous, in accordance with the definition of the topology, it is enough to check that the restriction of $m$ to each $Q_1^I\times Q_2^I$ is continuous. The image of this set is contained in $m(Q_1\times Q_2)^I$. Now it is enough to observe that the map $m$ is continuous with regard to the product topology on $b(I,\Gamma)$ induced from $\Gamma^I$ (the Tychonoff product of $I$ copies of a countable discrete space), and this product topology induces the same compact topology on each $Q^I$. The same works for the inversion map. 

Finally, since our group topology is finer than the product topology, it is also  Hausdorff.

Such topologies are called $k_{\omega}$-topologies \cite{FST}, or else the weak topologies with regard to a countable cover by compact subsets, or the chain topologies. For instance, this is a standard topology on $CW$ complexes. Some of the $k_{\omega}$ spaces are metrizable ($\R^n$), others are not (the $LF$ space $\R^\infty$). The topological group $U(\infty)=\cup_nU(n)$, the union of an increasing chain of the finite rank unitary groups with the chain topology, is a $k_\omega$ group of some importance in representation theory. 

If a topological group $G$ is a $k_{\omega}$-space, that is, its topology is a $k_\omega$-topology with regard to a countable sequence of compact sets $K_n$, then $G$ is necessarily complete, already with regard to the left uniformity. We can assume that $K_n$ is an increasing sequence.
Given a (decreasing) sequence $(V_n)$ of neighbourhoods of the identity, one can select recursively a neighbourhood $V$ with the property that for each $m$ there is $n$ with $VK_m\subseteq V_nK_n$. If now $\mathcal F$ is a Cauchy filter, it follows that for each sequence $(V_n)$ there is $n$ with $V_nK_n\in {\mathcal F}$. By the diagonal argument, $\mathcal F$ intersects each neighbourhood of some $K_n$, and converges to an element of $K_n$. See \cite{graev_free} for this argument in the context of free topological groups where it probably appeared for the first time.

If the index set $I$ is countable, the group $b(I,\Gamma)$ is separable, as a countable union of metrizable compacta (Cantor sets), $Q^I$. In particular, for $\Gamma$ non-trivial, this topology is never discrete.

\begin{proposition}
If $\Gamma$ is infinite, then the topological group $b(I,\Gamma)$ is non-metrizable. 
\end{proposition}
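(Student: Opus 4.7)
The plan is to show that $b(I,\Gamma)$ is not first countable at the identity, which by the Birkhoff--Kakutani theorem will imply non-metrizability. (I take $I$ to be infinite, since otherwise the chain topology is discrete and the proposition fails.) Suppose for a contradiction that $(V_n)_{n\in\N}$ is a countable neighbourhood basis of the identity $e$ in $b(I,\Gamma)$.

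Since $\Gamma$ is infinite, select distinct elements $g_1,g_2,\ldots\in\Gamma\setminus\{e\}$ and put $Q_n=\{e,g_n\}$. By the very definition of the chain topology, each trace $V_n\cap Q_n^I$ is open in the compact product topology on $Q_n^I$, in which $Q_n$ is discrete. As $V_n\cap Q_n^I$ contains the constant function $e$, it must contain a basic cylindrical neighbourhood
\[
\{f\in Q_n^I:f|_{F_n}\equiv e\}
\]
for some finite $F_n\subseteq I$. Pick $i_n\in I\setminus F_n$ and let $f_n\in b(I,\Gamma)$ be the element equal to $g_n$ at the coordinate $i_n$ and to $e$ elsewhere. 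Then $f_n\in V_n$, while $f_n\neq e$.

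Next, I would verify that $S:=\{f_n:n\in\N\}$ is closed in $b(I,\Gamma)$. For every finite $Q\subseteq\Gamma$, the intersection $S\cap Q^I=\{f_n:g_n\in Q\}$ is finite, because the $g_n$ are pairwise distinct, and hence closed in $Q^I$. By the definition of the chain topology this makes $S$ closed, so $U:=b(I,\Gamma)\setminus S$ is an open neighbourhood of $e$. But $f_n\in V_n\setminus U$ for every $n$, so no $V_n$ is contained in $U$, contradicting the basis property.

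The only non-routine ingredient is invoking the defining property of the chain topology twice: first to extract a cylindrical neighbourhood inside each $V_n\cap Q_n^I$, and then to certify that the diagonal set $S$ is closed. Once these are in place, the argument is a straightforward diagonal construction against any proposed countable basis at $e$.
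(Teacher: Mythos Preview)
Your proof is correct and takes a genuinely different route from the paper's. The paper argues via Baire category: since $b(I,\Gamma)$ is a $k_\omega$-group it is complete, so if it were metrizable it would be a Baire space; then some $Q^I$ would have nonempty interior, and by translation $(Q\cdot Q)^I$ would be a neighbourhood of the identity. This is then contradicted by observing that $(Q\cdot Q)^I$ is not relatively open in $Q_1^I$, where $Q_1$ is $Q\cdot Q$ with one extra point adjoined (and here, as you note, $I$ infinite is needed). Your argument bypasses completeness and Baire entirely, running a direct diagonalisation against any putative countable basis: the key point is that because the values $g_n$ are all distinct, the diagonal set $S=\{f_n\}$ meets each $Q^I$ in a finite set and is therefore closed. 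The paper's proof is quicker once the completeness of $k_\omega$-groups is available, while yours is more self-contained and pinpoints the combinatorial obstruction---any single finite $Q$ can trap only finitely many of the $f_n$---without appealing to category.
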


\begin{proof}
        Otherwise, being metrizable and complete, it would satisfy the Baire category theorem, and one of the $Q^I$ would have a non-empty interior, so $Q^I\cdot Q^I=(Q\cdot Q)^I$ would have been open by a standard argument. But this set is not even relatively open in $Q_1^{I}$, where the finite set $Q_1$ is obtained from $Q\cdot Q$ by adding one extra point. 
\end{proof}

\begin{lemma}
        If $\Gamma$ is a minimally almost periodic discrete group, then $b(I,\Gamma)$ is a minimally almost periodic topological group.
        \label{l:map}
\end{lemma}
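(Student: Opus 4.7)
The plan is to show that every continuous finite-dimensional unitary representation $\pi \colon b(I,\Gamma) \to U(n)$ is trivial, by reducing coordinate by coordinate to the minimal almost periodicity of the discrete group $\Gamma$ and then propagating triviality by density.

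First, for each $i \in I$, I would consider the coordinate embedding $\iota_i \colon \Gamma \to b(I,\Gamma)$ sending $g$ to the function taking value $g$ at $i$ and $e$ elsewhere. The composition $\pi \circ \iota_i$ is an abstract group homomorphism from $\Gamma$ to $U(n)$; since $\Gamma$ carries the discrete topology, it qualifies automatically as a continuous finite-dimensional unitary representation, and the hypothesis that $\Gamma$ is minimally almost periodic as a discrete group forces it to be trivial. Consequently $\pi$ vanishes on every coordinate copy $\iota_i(\Gamma)$, and hence on the subgroup they generate, namely the direct sum $\oplus_{i\in I}\Gamma$.

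Next I would verify that $\oplus_{i\in I}\Gamma$ is dense in $b(I,\Gamma)$. Given $f \in b(I,\Gamma)$ with finite range $Q$ (which we may enlarge to contain $e$) and any open neighborhood $U$ of $f$, the defining property of the $k_\omega$-topology says that $U \cap Q^I$ is open in the Tychonoff product topology on $Q^I$, so it contains a cylinder set fixing the values of $f$ on some finite subset $J \subseteq I$. The function agreeing with $f$ on $J$ and equal to $e$ off $J$ then lies in both $U$ and $\oplus_{i\in I}\Gamma$. With density established, continuity of $\pi$ and the Hausdorff property of $U(n)$ force $\pi$ to be trivial on all of $b(I,\Gamma)$.

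There is no serious obstacle here; the one mildly technical point is the density verification, which is an immediate consequence of the definition of the $k_\omega$-topology and is essentially the hypothesis already built into Lemma 7.1. The argument makes no use of any additional structure on $\Gamma$ beyond its being discrete and minimally almost periodic, and it works for arbitrary index sets $I$.
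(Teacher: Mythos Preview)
Your proof is correct and follows essentially the same approach as the paper's: kill each coordinate copy of $\Gamma$ using minimal almost periodicity, deduce triviality on the direct sum, then use density of the direct sum in the $k_\omega$-topology. Your version simply spells out in more detail the two points the paper leaves implicit (why the restriction to each $\iota_i(\Gamma)$ is automatically continuous, and why the direct sum is dense).
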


\begin{proof}
        Every continuous finite-dimensional unitary representation  $\pi\colon b(I,\Gamma)\to U(k)$ satisfies $\pi(\Gamma_n)=\{e\}$ for all $n$, hence the image of the direct sum group $\oplus_{i\in I}\Gamma_i$ is trivial. But this group is everywhere dense in $b(I,\Gamma)$ with regard to our $k_{\omega}$-topology (as it is dense in every set $Q^I$).
\end{proof}

\subsection{Kazhdan sets in groups of bounded maps}

Lemma \ref{l:infq} implies:

\begin{corollary}
        If $\Gamma$ is a discrete Kazhdan group, then $b(I,\Gamma)$ with the  $k_{\omega}$-topology is a Kazhdan topological group with a compact Kazhdan set. \qed 
        \label{l:q}
\end{corollary}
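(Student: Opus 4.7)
The plan is to invoke Lemma \ref{l:infq} with $\Gamma_i = \Gamma$ for every $i \in I$ and $Q_i = Q_0$, where $Q_0$ is a fixed Kazhdan set for the discrete Kazhdan group $\Gamma$ with some Kazhdan constant $\e > 0$. Since $\Gamma$ is discrete, the compactness of $Q_0$ forces $Q_0$ to be finite; this finiteness will be the engine making everything work.

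First I would check that $b(I,\Gamma)$ meets the structural hypotheses of Lemma \ref{l:infq}. It is by definition a subgroup of $\prod_{i\in I}\Gamma$ that contains the direct sum $\oplus_{i\in I}\Gamma$, and because $Q_0$ is finite the product set $Q := Q_0^I$ is contained in $b(I,\Gamma)$. The remaining hypothesis is density of $\oplus_{i\in I}\Gamma$ in $b(I,\Gamma)$ with regard to the $k_\omega$-topology, and this is exactly the density observation already used in the proof of Lemma \ref{l:map}: inside each compact building block $Q^I$ (with $Q\subseteq\Gamma$ finite), functions of finite support are dense in the product topology, and by definition of the chain topology a set is closed in $b(I,\Gamma)$ iff its intersection with every such $Q^I$ is closed, so density propagates from each compact piece to the whole group.

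Lemma \ref{l:infq} then yields directly that, for any $\delta \in (0,\e)$, the set $Q_0^I$ is a Kazhdan set for $b(I,\Gamma)$ with Kazhdan constant $\delta$. To conclude, one only has to observe that $Q_0^I$ is compact in the $k_\omega$-topology on $b(I,\Gamma)$: by construction of the chain topology, the topology induced on each $Q^I$ (for $Q \subseteq \Gamma$ finite) coincides with the compact Tychonoff product topology inherited from $\Gamma^I$, so $Q_0^I$ is a compact subset of $b(I,\Gamma)$ (a Cantor space when $I$ is countably infinite and $Q_0$ has more than one element).

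There is no real obstacle here: the corollary is essentially a specialization of Lemma \ref{l:infq} to the case where all factors coincide, combined with the tautological fact that finite-support Kazhdan products become compact in the chain topology precisely because the factor Kazhdan set is finite. The only point that merits explicit mention, rather than a calculation, is the compatibility between the $k_\omega$-topology on $b(I,\Gamma)$ and the product topology on $Q_0^I$, since this is what both provides the required compactness and makes the density hypothesis of Lemma \ref{l:infq} transparent.
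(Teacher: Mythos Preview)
Your proposal is correct and follows exactly the route the paper intends: the paper's own proof is simply the phrase ``Lemma \ref{l:infq} implies'' followed by a \qed, and you have faithfully unpacked the verification of that lemma's hypotheses (finiteness of $Q_0$, containment of $Q_0^I$ and of the direct sum in $b(I,\Gamma)$, density of the direct sum in the $k_\omega$-topology) together with the observation that $Q_0^I$ is compact because the chain topology restricts to the Tychonoff product topology on each $Q^I$.
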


\begin{lemma}
        If $I$ is infinite and $\Gamma$ is nontrivial, $b(I,\Gamma)$ with the  $k_{\omega}$-topology does not admit a finite Kazhdan set (as a topological group).
        \label{l:nofinite}
\end{lemma}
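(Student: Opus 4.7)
I will show that for every finite $F\subseteq b(I,\Gamma)$ there is a strongly continuous unitary representation of $b(I,\Gamma)$ which has no nonzero invariant vector but which admits a unit vector pointwise fixed by $F$. Such a vector is $(F,\e)$-invariant for every $\e>0$, so this precludes $F$ from being a Kazhdan set under any Kazhdan constant.

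\textbf{Reduction to two coordinates.} Each $f\in F$ has finite range, so the common refinement of the partitions $\{f^{-1}(g):g\in\Gamma\}$, $f\in F$, is a finite partition $\mathcal{P}$ of $I$. Since $I$ is infinite, at least one block $P\in\mathcal{P}$ is infinite, and by construction every $f\in F$ is constant on $P$. Pick two distinct points $i_1,i_2\in P$ and consider the evaluation homomorphism
\[\phi\colon b(I,\Gamma)\to\Gamma\times\Gamma,\qquad \phi(f)=(f(i_1),f(i_2)),\]
where $\Gamma\times\Gamma$ carries the discrete topology. Then $\phi$ is surjective (given any $(g_1,g_2)$, the function taking value $g_1$ at $i_1$, $g_2$ at $i_2$ and $e$ elsewhere lies in $b(I,\Gamma)$), and it is continuous because its restriction to each compact $Q^I$ is a coordinate projection in the product topology, so the preimage of every point meets each $Q^I$ in an open cylinder and is therefore open in $b(I,\Gamma)$. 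By the pigeonhole choice of $P$, $\phi(F)\subseteq\Delta$, where $\Delta=\{(g,g):g\in\Gamma\}$ is the diagonal subgroup.

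\textbf{Auxiliary representation of $\Gamma\times\Gamma$.} Identify $(\Gamma\times\Gamma)/\Delta$ with $\Gamma$ via $(g_1,g_2)\Delta\mapsto g_1g_2^{-1}$; the induced action of $\Gamma\times\Gamma$ on $\Gamma$ is $(h_1,h_2)\cdot g=h_1gh_2^{-1}$. Let $\pi$ be the associated unitary quasi-regular representation on $\ell^2(\Gamma)$, given by $((h_1,h_2)\psi)(g)=\psi(h_1^{-1}gh_2)$. A $(\Gamma\times\Gamma)$-invariant vector must be a constant function on $\Gamma$, so if $\Gamma$ is infinite then $\pi$ already has no nonzero invariant vectors; if $\Gamma$ is finite and nontrivial, we replace $\ell^2(\Gamma)$ by the orthogonal complement of the one-dimensional subspace of constants. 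In either case the point mass $\delta_e$ (respectively its projection $\delta_e-\abs{\Gamma}^{-1}\mathbf{1}_\Gamma$) is a nonzero vector fixed by $\Delta$, because $e$ is the unique point of $\Gamma$ fixed under conjugation.

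\textbf{Conclusion and main obstacle.} The composition $\rho=\pi\circ\phi$ is a strongly continuous unitary representation of $b(I,\Gamma)$ which, by surjectivity of $\phi$, inherits from $\pi$ the absence of nonzero invariant vectors, and which admits a unit vector fixed pointwise by every element of $F$ since $\phi(F)\subseteq\Delta$. Hence $F$ is not a Kazhdan set with any Kazhdan constant. The main subtlety is the construction of the auxiliary representation $\pi$: for abelian $\Gamma$ one could equivalently pull back a nontrivial character of $\Gamma$ via $f\mapsto\chi(f(i_1)f(i_2)^{-1})$, but for the motivating case of a perfect (infinite simple) Kazhdan group $\Gamma$ no nontrivial character exists, and the quasi-regular representation on $(\Gamma\times\Gamma)/\Delta$ is what works uniformly in $\Gamma$.
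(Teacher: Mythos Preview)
Your proof is correct and follows essentially the same route as the paper: find two coordinates on which every element of $F$ agrees, project to $\Gamma\times\Gamma$ (discrete), and use the quasi-regular representation on the coset space of the diagonal. Your treatment is in fact slightly more careful than the paper's, since you explicitly handle finite $\Gamma$ by passing to the orthogonal complement of the constants (the paper's assertion that $\ell^2(\Gamma^2/\Delta)$ has no globally fixed vector is literally false for finite $\Gamma$); one harmless slip is your remark that $e$ is the \emph{unique} conjugation-fixed point---this fails when $\Gamma$ has nontrivial centre, but all your argument needs is that $e$ is \emph{a} fixed point, which is immediate.
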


\begin{proof} Let $F$ be a finite subset of $b(I,\Gamma)$. By the pigeonhole principle  and because of the boundedness of each $g\in F$, for some distinct $i,j\in I$ one will have $g_i=g_j$ for every $g\in F$. Denote $\pi=\pi_{\{i,j\}}$ the coordinate projection of $b(I,\Gamma)$ to $\Gamma_i\times\Gamma_j$; clearly, $\pi$ is continuous when the latter group is discrete. 
        The image of $F$ under $\pi$ is contained in the diagonal, $\Delta$, which is a proper subgroup of $\Gamma^2$. The rest is standard. The composition of $\pi$ with the left quasiregular representation of $\Gamma^2$ on $\ell^2(\Gamma^2/\Delta)$ has a vector fixed by $F$, but no globally fixed vector.
\end{proof}

\begin{remark}
        Since the group $b(I,\Gamma)$ is uncountable, of course it does not admit a finite Kazhdan set as a discrete group. But apriori it may admit one as a topological group, in the same way as $U(\ell^2)_s$ admits a Kazhdan set with two elements \cite{Bek3}. 
\end{remark}

\begin{example}
        Let $\Gamma$ be an infinite simple Kazhdan group (cf. \cite{ozawa} and references). Then the complete, separable topological group $b(\N,\Gamma)$ has property $(T)$ with a compact Kazhdan set by Corollary \ref{l:q}, yet admits no finite Kazhdan set by Lemma \ref{l:nofinite}. At the same time, it is minimally almost periodic by Lemma \ref{l:map}. 
        
        This gives a counter-example to the conjecture of Bekka. It would be interesting to find an example of a Polish group with the same combination of properties. For instance, the Polish group $\Gamma^\N$ (with the product topology) is Kazhdan with the same Kazhdan set $Q$, minimally almost periodic, but this author does not know whether $\Gamma$ can be so chosen that $\Gamma^\N$ has no finite Kazhdan set.
\end{example}

\subsection{Infinite products of Kazhdan groups} It is well-known that the class of groups with property $(T)$ is closed under finite products.
As another consequence of Lemma \ref{l:infq}, we have the following curious result.

\begin{theorem}
        The class of topological groups with property $(T)$ is closed under arbitrary (infinite) products, with the usual product topology.
\end{theorem}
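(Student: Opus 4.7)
The plan is to apply Lemma~\ref{l:infq} directly, taking $\Gamma = G = \prod_{i \in I} G_i$ endowed with the product topology and $\Gamma_i = G_i$. The density hypothesis is automatic here: a basic open neighbourhood of the identity in $G$ has the form $\prod_{i \in F} U_i \times \prod_{i \notin F} G_i$ for some finite $F \subset I$, and any such neighbourhood meets $\bigoplus_{i \in I} G_i$ (take $u_i \in U_i$ for $i \in F$ and $e$ elsewhere), so the direct sum is dense in $G$.

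The preparatory step is to arrange compact Kazhdan pairs $(Q_i, \e)$ for the individual $G_i$'s with a common Kazhdan constant $\e > 0$, and with $e \in Q_i$ for each $i$. For this I would invoke the classical fact that every topological group with property $(T)$ admits compact Kazhdan pairs whose Kazhdan constant can be chosen arbitrarily close to the universal bound $\sqrt{2}$, by suitably enlarging the Kazhdan set (cf.\ \cite{BdlHV}). Fixing any $\e \in (0,\sqrt{2})$, each $G_i$ may then be equipped with a compact Kazhdan pair $(Q_i, \e)$ sharing this constant.

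Having secured a common $\e$, I would form the product $Q = \prod_{i \in I} Q_i$, which is compact in $G$ by Tychonoff's theorem. Both $Q$ and $\bigoplus_i G_i$ lie in $G$, so all hypotheses of Lemma~\ref{l:infq} are satisfied, and it yields that $(Q, \delta)$ is a Kazhdan pair for $G$ for any $\delta \in (0, \e)$. Hence $G$ admits a compact Kazhdan pair, i.e., $G$ has property $(T)$.

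The main obstacle is the calibration of a common Kazhdan constant: without the uniform bound $\inf_i \e_i > 0$, the proof of Lemma~\ref{l:infq} breaks down, since its key estimate $\norm{g\xi - \xi}^2 \geq \e^2 \sum_\beta \delta_\beta^2$ genuinely relies on this uniformity. Once the boosting step is carried out, the remainder is a clean combination of Tychonoff's theorem and the density of the direct sum in the product topology.
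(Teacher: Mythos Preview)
Your approach is essentially the same as the paper's: reduce to Lemma~\ref{l:infq} by first arranging a common Kazhdan constant close to $\sqrt{2}$, then take the Tychonoff product of the compact Kazhdan sets. The one point worth flagging is that the paper does not merely cite the boosting step but proves it as a separate lemma (via an ultraproduct argument combined with Proposition~1.1.5 of \cite{BdlHV}), because the standard references state it for locally compact groups; you should check that the version you invoke from \cite{BdlHV} genuinely covers arbitrary topological groups, or else supply the short argument yourself.
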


This will follow from Lemma \ref{l:infq} once we assure that, given a family $\Gamma_i$, $i\in I$ of Kazhdan topological groups, we can always select  compact Kazhdan sets $Q_i\subseteq\Gamma_i$ with a prescribed Kazhdan constant (for instance, $1$).

\begin{lemma}
        Let $\Gamma$ be a topological group with property $(T)$. Let $(Q,\e)$ be a Kazhdan pair. Let $\delta>0$. Then for a suitably large natural $n$, the pair $((Q\cup Q^{-1})^n, \sqrt 2 -\delta)$ is a Kazhdan pair in the topological group $\Gamma$.
\end{lemma}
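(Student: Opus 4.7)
I would argue by contradiction. Replacing $Q$ by $Q\cup Q^{-1}\cup\{e\}$ preserves the Kazhdan pair (the new set is still a compact Kazhdan set with the same constant $\e$), so we may assume $Q$ is symmetric and contains the identity; then $F_n:=(Q\cup Q^{-1})^n=Q^n$ is an increasing chain of compact sets. Suppose the conclusion fails for some $\delta>0$: for every $n$ there is a strongly continuous unitary representation $\pi$ of $\Gamma$ without an invariant unit vector, and a unit vector $\xi\in\H_{\pi}$ with $\|\pi(g)\xi-\xi\|<\sqrt2-\delta$ for every $g\in F_n$. Writing $s:=1-(\sqrt2-\delta)^2/2=\sqrt2\,\delta-\delta^2/2>0$, this is equivalent to $\mathrm{Re}\,\langle\pi(g)\xi,\xi\rangle>s$ for all $g\in F_n$.

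Fix a symmetric Borel probability measure $\mu$ on $\Gamma$ whose support equals $Q$ (the uniform distribution if $Q$ is finite, or any regular Borel probability of full support on $Q$ in general), and set $\mu':=\tfrac12(\delta_e+\mu)$. For every unitary representation $\pi$, the convolution operator $S_\pi:=\pi(\mu')=\tfrac12(I+\pi(\mu))$ is a positive self-adjoint contraction, and $S_\pi^n=\pi((\mu')^{*n})$ is supported in $F_n$. Since the pointwise lower bound $\mathrm{Re}\,\langle\pi(g)\xi,\xi\rangle>s$ holds for all $g\in F_n$ (trivially at $g=e$, where it equals $1$), integrating against $(\mu')^{*n}$ gives
\[
\langle S_\pi^n\xi,\xi\rangle=\int\langle\pi(g)\xi,\xi\rangle\,d(\mu')^{*n}(g)>s.
\]

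The central step is to use property $(T)$ to obtain a uniform spectral gap: there exists $\alpha=\alpha(Q,\e,\mu)>0$ such that $\|S_\pi\|\le 1-\alpha$ for every strongly continuous unitary representation $\pi$ of $\Gamma$ without invariant vectors. Given this, positivity and self-adjointness of $S_\pi$ imply $|\langle S_\pi^n\xi,\xi\rangle|\le\|S_\pi\|^n\le(1-\alpha)^n$, contradicting the lower bound $s$ as soon as $n>\log s/\log(1-\alpha)$, a threshold depending only on $\e$, $\delta$, and the fixed $\mu$.

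The spectral-gap step is the main obstacle. An eigenvalue $1$ of $S_\pi$ is ruled out because it would force $\pi(g)\eta=\eta$ for $\mu$-almost every (and hence, by continuity and density of the subgroup generated by $Q$, every) $g\in Q$, yielding a nonzero $\pi(\Gamma)$-invariant vector. What has to be made \emph{quantitative and uniform} is the identity
\[
1-\langle\pi(\mu)\xi,\xi\rangle=\tfrac12\int\|\pi(g)\xi-\xi\|^2\,d\mu(g):
\]
one must convert the Kazhdan pointwise bound $\sup_{g\in Q}\|\pi(g)\xi-\xi\|\ge\e$ into a uniform lower bound on this integral. For finite $Q$ with uniform $\mu$ this is immediate and yields $\alpha\ge\e^2/(4|Q|)$; in the compact non-discrete case one combines continuity of $g\mapsto\|\pi(g)\xi-\xi\|$ with a positive lower bound on the $\mu$-measure of sufficiently small balls in $Q$ to secure an analogous estimate. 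Once the gap $\alpha$ is in hand, the remainder of the argument is purely formal.
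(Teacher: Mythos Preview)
Your approach differs from the paper's: you aim for a uniform spectral gap for the Markov operator $S_\pi=\pi(\mu')$, whereas the paper forms the metric ultraproduct of the counterexample representations $(\pi_n,\xi_n)$ and appeals to the general fact (Prop.~1.1.5 in \cite{BdlHV}) that $(G,\sqrt 2-\delta)$ is a Kazhdan pair for \emph{every} group $G$---a statement needing no continuity of the ultraproduct representation. For finite $Q$ your argument is complete and even yields an explicit $n$, which the ultraproduct route does not.

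For compact infinite $Q$, however, the justification of the spectral gap breaks down. The neighbourhood of the bad point $g_0\in Q$ on which $\|\pi(g)\xi-\xi\|\ge\e/2$ has size controlled by the modulus of strong continuity of $g\mapsto\pi(g)\xi$ at the identity, and that modulus is \emph{not uniform} over representations $\pi$ and unit vectors $\xi$. Your sketch does not exclude a sequence $(\pi_n,\xi_n)$, each $\pi_n$ without invariant vectors, in which the set $\{g\in Q:\|\pi_n(g)\xi_n-\xi_n\|\ge\e/2\}$ has $\mu$-measure tending to zero while the pointwise Kazhdan bound $\sup_{g\in Q}\|\pi_n(g)\xi_n-\xi_n\|\ge\e$ persists; in that scenario $\langle S_{\pi_n}\xi_n,\xi_n\rangle\to 1$ and no uniform $\alpha$ is available. (A minor correction: a Kazhdan set in a non-discrete topological group need not generate a dense subgroup, as the paper itself remarks; to rule out eigenvalue $1$ of $S_\pi$ you should instead observe that a $Q$-fixed unit vector is trivially $(Q,\e)$-almost invariant, forcing an invariant vector.) The paper's ultraproduct argument sidesteps exactly this uniformity obstacle, and any repair of the Markov-operator route for general compact $Q$ would seem to require a compactness argument of comparable strength.
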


\begin{proof} 
        Assume the contrary, that is, for every $n$, there is a strongly continuous unitary representation $\pi_n$ of $\Gamma$ in a Hilbert space $\H_n$ which contains a $((Q\cup Q^{-1})^n, \sqrt 2 -\delta)$-almost invariant vector $\xi_n$ of unit norm, and yet has no invariant vectors, in particular, no vector is $(Q,\e)$-almost invariant. Then $\Gamma$ is unitarily represented in the metric ultraproduct $\H$ of the Hilbert spaces $\H_n$ with regard to some non-principal ultrafilter, $\mathcal U$, on the natural numbers (for this construction, see e.g. \cite{CCS}). The ultraproduct representation, $\pi$, is in general no longer strongly continuous, but this is unimportant. Denote $G$ a subgroup of $\Gamma$ algebraically generated by $Q$; note that for non-discrete topological groups, one generally does not expect $G$ to be dense in $\Gamma$.
        One sees easily that $\pi$ still has no $(Q,\e)$-almost invariant vectors, in particular no $G$-invariant vectors since $Q\subseteq G$. At the same time, there is a vector of unit norm which is $(G,\sqrt 2 -\delta)$-almost invariant, namely the equivalence class $[(\xi_n)]_{\mathcal U}$ of the sequence $(\xi_n)$ modulo the ultrafilter. This contradicts Proposition 1.1.5 in \cite{BdlHV}: for any group $G$ and each $\delta>0$, $(G,\sqrt 2 -\delta)$ is a Kazhdan pair. 
\end{proof}

\section{Some question marks}

We still do not know whether every amenable SIN group with property $(T)$ (that is, having a compact Kazhdan set), whose unitary representations separate points and closed subsets, is necessarily precompact. This would strengthen Theorem\ \ref{th:major}. 
Other questions are summarized in the table below, where f.p.c. stands for the fixed point on compacta property (extreme amenability), and m.a.p., for minimal almost periodicity.
  \vskip .3cm

\begin{tabular}{|l|c|c|c|c|c|c|c|}
\hline
Topological group & amenable & f.p.c. & SIN & m.a.p. & OB &  strong $(T)$ & $(T)$ \\
\hline\hline
$S_{\infty}$ & $\checkmark$ & $\times$ & $\times$ & $\checkmark$& $\checkmark$& $\checkmark$ & $\checkmark$ \\ \hline
$\Aut(\Q,\leq)$ & $\checkmark$ & $\checkmark$ & $\times$ & $\checkmark$& $\checkmark$& $\checkmark$ & $\checkmark$ \\ \hline
$U(n)$ & $\checkmark$ & $\times$ & $\checkmark$ & $\times$ & $\checkmark$ & $\checkmark$ iff $n\geq 6$ & $\checkmark$ \\ \hline
$U(\ell^2)_s$ & $\checkmark$ & $\checkmark$ & $\times$ & $\checkmark$& $\checkmark$& $\checkmark$ & $\checkmark$ \\ \hline
$U_C(\ell^2)$ & $\checkmark$ & $\checkmark$ & $\checkmark$ & $\checkmark$ & $\checkmark$ & $\times$ & $\times$ 
\\ \hline
$U(\infty)_2$ & $\checkmark$ & $\checkmark$ & $\checkmark$ & $\checkmark$ & $\times$ & $\times$ & $\times $ \\
\hline
$U(\ell^2)_u$ & $\times$ & $\times$ & $\checkmark$ & $\checkmark$ & $\checkmark$ & ? & ? \\ \hline
$U(R)_2$ & $\checkmark$ & $\checkmark$ & $\checkmark$ & $\checkmark$ &
$\checkmark$ & $\times$ & ? \\ \hline
$U(R^\omega)_2$ & $\times$ & $\times$ & $\checkmark$ & $\checkmark$ &
$\checkmark$ & ? & ? \\ \hline
$\Aut(X,\mu)_w$ & $\checkmark$ & $\checkmark$ & $\times$ & $\checkmark$& $\checkmark$& ? & ? \\ \hline
$\Aut(X,\mu)_u$ & $\times$ & $\times$ & $\checkmark$ & $\checkmark$& $\checkmark$& ? & ? \\ \hline
$\Aut^\ast(X,\mu)_w$ & $\checkmark$ & $\checkmark$ & $\times$ & $\checkmark$& $\checkmark$& ? & ? \\ \hline
$\Aut^\ast(X,\mu)_u$ & ? & ? & $\times$ & $\checkmark$& $\checkmark$& ? & ? \\ \hline
$L^0(X,\mu;K)$, $\mu$ diffuse & $\checkmark$ & $\checkmark$ & $\checkmark$ & $\checkmark$ &
$\checkmark$ & $\times$ & ? \\
same, if $K=\T$  & $\checkmark$ & $\checkmark$ & $\checkmark$ & $\checkmark$ &
$\checkmark$ & $\times$ & $\times$ \\
 \hline
$[{\mathscr R}]$, 
$\mathscr R$ hyperfinite:  &&&&&& &  \\
- measure-preserving&$\checkmark$ &$\checkmark$&$\checkmark$&$\checkmark$&$\checkmark$& $\times$ & ? \\
- non-singular &$\checkmark$ &$\checkmark$&$\times$&$\checkmark$& $\checkmark$ & ? & ? \\
 \hline
 $b(\N,\Gamma)$, $\Gamma$ inf. simple $(T)$ & $\times$ & $\times$ & ? & $\times$ &$\times$ &$\times$ &$\checkmark$ \\
\hline 
$\Gamma^{\N}$, $\Gamma$ inf. simple $(T)$ & $\times$ & $\times$ & $\checkmark$ & $\times$ &$\times$ & ? &$\checkmark$ \\
\hline 
\end{tabular}
\vskip .4cm

The author thanks Philip Dowerk and Andreas Thom, and Sven Raum for a useful remark (in Subs. \ref{ur2}), and the anonymous referee for a number of comments.

\end{document}